\newtheorem{Lemma}{Lemma}[section]
\newtheorem{theorem}[Lemma]{Theorem}
\newtheorem{proposition}[Lemma]{Proposition}
\newtheorem{corollary}[Lemma]{Corollary}
\newtheorem{remark}[Lemma]{Remark}
\newcommand{\Core}{\mbox{\rm Core}}
\newcommand{\Char}{\mbox{\rm char}}
\newcommand{\GL}{\mbox{\rm GL}}
\newcommand{\SL}{\mbox{\rm SL}}
\newcommand{\Z}{\mathbb{Z}}
\newcommand{\Q}{\mathbb{Q}}
\newcommand{\F}{\mathbb{F}}
\begin{document}
\title[Free subgroups in almost subnormal subgroups] {Free subgroups in almost subnormal subgroups\\ of general skew linear groups}
\author{Nguyen Kim Ngoc}\author[Mai Hoang Bien]{Mai Hoang Bien}\author[Bui Xuan Hai]{Bui Xuan Hai}
 
\address{Nguyen Kim Ngoc}
 \address{University of Science, VNU-HCM, Vietnam}
 \email{nkngoc1985@gmail.com}
 \address{Faculty of Mathematics and Computer Science, University of Science, VNU-HCM, 227 Nguyen Van Cu Str., Dist. 5, Ho Chi Minh City, Vietnam.}  
  
 \address{Mai Hoang Bien}
 \address{University of  Architecture, HCM, Vietnam}
 \email{ maihoangbien012@yahoo.com}
 \address{Department of Basic Sciences, University of  Architecture\\ 196 Pasteur Str., Dist. 1, Ho Chi Minh City, Vietnam} 
 
 \address{Bui Xuan Hai}
 \address{University of Science, VNU-HCM, Vietnam}
 \email{bxhai@hcmus.edu.vn}
 \address{Faculty of Mathematics and Computer Science, University of Science, VNU-HCM, 227 Nguyen Van Cu Str., Dist. 5, Ho Chi Minh City, Vietnam.}   

\keywords{  division rings; linear groups; almost subnormal subgroups; non-cyclic free subgroups; generalized group identity. \\
	\protect \indent 2010 {\it Mathematics Subject Classification.} 16K20, 16K40, 16R50.}

\maketitle
\begin{abstract} Let $D$ be a weakly locally finite division ring  and $n$ a positive integer. In this paper, we investigate the problem on the existence of non-cyclic free subgroups in non-central almost subnormal subgroups of the general linear group ${\rm GL}_n(D)$. Further, some applications of this fact are also investigated. In particular, all infinite finitely generated almost subnormal subgroups of ${\rm GL}_n(D)$ are described. 
\end{abstract}
\section{Introduction and preliminaries}
Let $G$ be a group and $H$ be a subgroup of $G$. According to Hartley \cite{Pa_Ha_89}, we say that $H$ is {\it almost subnormal} in $G$, and write $H$ asn $G$ for short, if there is a family of subgroups 
$$H=H_r\le H_{r-1}\le\cdots\le H_1=G$$
of $G$ such that for each $1<i\le r$, either $H_{i}$ is normal in $H_{i-1}$ or $H_i$ has finite index in $H_{i-1}$. We call such a series of subgroups an {\it almost normal series} in $G$. In this paper, we study the problem on the existence of non-cyclic free subgroups in almost subnormal subgroups of the general linear group ${\rm GL}_n(D)$ over a division ring $D$ and the related problems. 

The question on the existence of non-cyclic free subgroups in linear groups over a field was studied by Tits in \cite{tits}. The main theorems of Tits assert that in the characteristic $0$, every subgroup of the general linear group ${\rm GL}_n(F)$ over a field $F$ either contains a non-cyclic free subgroup or it is soluble-by-finite, and the same conclusion for finitely generated subgroups in the case of prime characteristic. This famous result of Tits is now often referred as Tits' Alternative. The question of whether Tits' Alternative would remain true for skew linear groups was posed by S. Bachsmuth at the Second International Conference on the Theory of Groups (see \cite[p. 736]{bach}). In \cite{lichtman_77}, Lichtman has proved that there exists a finitely generated group which is not soluble-by-finite and does not contain a non-cyclic free subgroup, but whose group ring over any field can be embedded in a division ring of quotients. Therefore, Tits' Alternative fails even for matrices of degree one, i.e. for $D^*={\rm GL}_1(D)$, where $D$ is a non-commutative division ring. In \cite{lichtman_77}, Lichtman remarked that it is not known whether the multiplicative group of a non-commutative division ring contains a non-cyclic free subgroup. In \cite{gon-ma_86}, Gon\c calves and Mandel posed more general question: whether a non-central subnormal subgroup of the multiplicative group of a division ring contains a non-cyclic free subgroup? This question was studied by several authors. Gon\c calves \cite{gon_84_1} proved that the multiplicative group $D^*$ of a division ring $D$ with center $F$  contains a non-cyclic free subgroup if $D$ is centrally finite, that is, $D$ is a finite dimensional vector space over $F$. The same result was obtained by Reichstein and Vonessen in \cite{rei-von} if $F$ is uncountable  and there exists a non-central element $a$ in $D$ which is algebraic over $F$. Later, Chiba \cite{chiba} proved the same result but without the assumption on the existence of such an element $a$ in $D$. In \cite{gon_84_2}, Gon\c calves proved that any non-central subnormal subgroup of $D^*$ contains a non-cyclic free subgroup provided $D$ is centrally finite. Recently, B. X. Hai and N. K. Ngoc \cite{hai-ngoc} proved the same result for weakly locally finite division rings. Recall that a division ring $D$ is called {\it weakly locally finite} if every finite subset of $D$ generates a centrally finite division subring. It was proved that every locally finite division ring is weakly locally finite, and there exist infinitely many weakly locally finite division rings that are not even algebraic over their centers (see \cite{hai-ngoc} and \cite{hbd}), so they are not locally finite. Logically, it is natural to carry over the  results above for subnormal subgroups of ${\GL}_1(D)$ to that of ${\rm GL}_n(D), n\geq 2$ (see \cite{Pa_GoSh_09, lichtman_87, shir-weh}). In the present paper, we investigate the question on the existence of free subgroups in almost subnormal subgroups of the group ${\rm GL}_n(D)$ with $n\geq 1$ and $D$ is a division ring unnecessarily commutative. 

Note that in \cite[Example 8]{Haz-Wad}, Hazrat and Wadsworth gave the examples of division rings whose multiplicative groups contains non-normal maximal subgroups of finite index. Hence, the existence of almost subnormal subgroups in $D^*:={\rm GL}_1(D)$ has no doubt. Concerning the group ${\rm GL}_n(D), n\geq 2$, we shall prove that if $D$ is infinite then every almost subnormal subgroup of  ${\rm GL}_n(D)$ is normal (see Theorem \ref{th8.2} in the text). However, we shall continue to use ``almost subnormal" instead of ``normal" to compare the results
with the corresponding ones in the case n = 1.

All symbols and notation we use in this paper are standard. In particular, if $A$ is a ring or a group, then $Z(A)$ denotes the center of $A$. If $D$ is a division ring with center $F$ and $S$ is a subset of $D$, then $F(S)$ denotes the division subring of $D$ generated by the set $F\cup S$. We say that $F(S)$ is the division subring of $D$ \textit{generated by $S$ over $F$.} Finally, $D':=[D^*,D^*]$ is the commutator subgroup of $D^*$. The following lemma which  will be used frequently in this paper, is almost evident, so we omit its proof.

\begin{Lemma}\label{l1.1} Let $H$ be an almost subnormal subgroup of a subgroup $G$. If $N$ is a subgroup of $G$ containing $H$, then $H$ is an almost subnormal subgroup of $N$.  
\end{Lemma}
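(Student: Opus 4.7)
The plan is to build an almost normal series for $H$ inside $N$ by intersecting the given almost normal series from $G$ with $N$. Concretely, since $H$ is almost subnormal in $G$, there is a series
$$H = H_r \le H_{r-1} \le \cdots \le H_1 = G$$
such that for each $1 < i \le r$, either $H_i \lhd H_{i-1}$ or $[H_{i-1} : H_i] < \infty$. I would set $K_i := H_i \cap N$, noting that $K_1 = N$ (because $N \le G$) and $K_r = H$ (because $H \le N$), so the resulting chain
$$H = K_r \le K_{r-1} \le \cdots \le K_1 = N$$
is a candidate series in $N$.

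The verification then splits into two cases at each step. If $H_i \lhd H_{i-1}$, a direct check shows that $K_i \lhd K_{i-1}$: for any $x \in K_{i-1}$ and $y \in K_i$ we have $xyx^{-1} \in H_i$ by normality, and $xyx^{-1} \in N$ because $x, y \in N$, hence $xyx^{-1} \in K_i$. If instead $[H_{i-1} : H_i] < \infty$, then the natural injection of cosets
$$K_{i-1}/K_i \;\hookrightarrow\; H_{i-1}/H_i$$
sending $xK_i$ to $xH_i$ (well-defined and injective because $H_i \cap K_{i-1} = K_i$) shows $[K_{i-1} : K_i] \le [H_{i-1} : H_i] < \infty$. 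Thus in either case the defining property of an almost normal series is preserved under intersection with $N$, which gives the desired almost subnormality of $H$ in $N$.

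There is no real obstacle here; the only point requiring a moment of care is the finite-index case, and it is handled by the standard coset-injection argument. This is precisely why the authors remark that the lemma is ``almost evident'' and omit the proof.
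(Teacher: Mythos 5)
Your proof is correct and is exactly the standard argument the authors had in mind when they declared the lemma ``almost evident'' and omitted the proof: intersect the almost normal series with $N$, checking normality directly and bounding the index via the injection of cosets $K_{i-1}/K_i \hookrightarrow H_{i-1}/H_i$ (which works because $H_i \cap K_{i-1} = K_i$). Nothing is missing.
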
 

\section{Almost subnormal subgroups with generalized group identities}
Let $G$ be a group with center $Z(G)=\{\,a\in G\mid ab=ba \text{ for any } b\in G\,\}$. An expression $$w(x_1,x_2, \cdots, x_m)=a_1x_{i_1}^{\alpha_1}a_2x_{i_2}^{\alpha_2}\cdots a_tx_{i_t}^{\alpha_t}a_{t+1},$$ where $t,m$ are positive integers, $i_1,i_2, \cdots, i_t\in \{\,1,2,\cdots, m\,\}$, $a_1,a_2, \cdots, a_{t+1}\in G$ and $\alpha_1, \alpha_2,\cdots, \alpha_{t}\in \Z\backslash\{0\}$, is called a {\it generalized group monomial} over $G$ if whenever  $i_j=i_{j+1}$ with $\alpha_j\alpha_{j+1}<0$ ($j\in\{1,2,\cdots, t-1\}$), then $a_j\not\in Z(G)$ (see \cite{Pa_To_85}). Moreover, if one has $i_j\ne i_{j+1}$ whenever $\alpha_j\alpha_{j+1}<0$, then we say that $w$ is a \textit{strict generalized group monomial} over $G$. If $G=\{1\}$, then we simply call $w$ a {\it group monomial}. It is clear that a group monomial is a strict generalized group monomial.

Let $H$ be a subgroup of $G$. We say that $H$ satisfies the {\it generalized group identity} $w(x_1,x_2, \cdots, x_m)=1$ if $w(c_1,c_2, \cdots, c_m)=1$ for any $c_1,c_2,\cdots, c_m\in H$. In particular, we say that $H$ satisfies a {\it group identity} (resp. \textit{strict generalized group identity}) $w=1$ if $w$ is a group monomial (resp. strict generalized group monomial) and $w(c_1,c_2, \cdots, c_m)=1$ for any $c_1,c_2,\cdots, c_m\in H$.

In this section, we prove some properties of almost subnormal subgroups with generalized group identities we need for the next study. We begin with the following  useful lemma. 
\begin{Lemma}\label{l2.1}
	Let $G$ be a group  and assume that $H$ is a non-central almost subnormal subgroup of $G$. If $H$ satisfies a generalized group identity over $G$, then so does $G$.
\end{Lemma}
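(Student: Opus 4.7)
The plan is to induct on the length $r$ of an almost normal series $H=H_r\le H_{r-1}\le\cdots\le H_1=G$. The case $r=1$ is trivial, and since $H\subseteq H_i$ for every $i$, non-centrality in $G$ automatically propagates upward along the chain, so it suffices to establish the following one-step assertion: if $K\le L\subseteq G$ with $K$ non-central in $G$ and either $K\triangleleft L$ or $[L:K]<\infty$, and if $K$ satisfies a generalized group identity over $G$, then $L$ also satisfies a generalized group identity over $G$. Applying this successively with $L=H_{r-1},\ldots,H_1=G$ yields the lemma.

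For the finite-index case, put $n=[L:K]$: the left-coset action $L\to\mathrm{Sym}(L/K)$ has kernel contained in $K$, so $g^{n!}\in K$ for every $g\in L$. Substituting $x_j\mapsto x_j^{n!}$ into the given identity $w(x_1,\ldots,x_m)=a_1 x_{i_1}^{\alpha_1}\cdots a_{t+1}=1$ produces
\[
w'(x_1,\ldots,x_m)=a_1 x_{i_1}^{n!\alpha_1}a_2\cdots a_t x_{i_t}^{n!\alpha_t}a_{t+1},
\]
which vanishes identically on $L$. Because each $n!\alpha_k$ has the same sign as $\alpha_k$ and the coefficients $a_j$ are unchanged, $w'$ is automatically a generalized group monomial over $G$.

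For the normal case $K\triangleleft L$, pick $h\in K\setminus Z(G)$ (available because $K$ is non-central in $G$) and substitute $x_j\mapsto b_j h b_j^{-1}$, which lies in $K$ by normality. The resulting expression
\[
w'(b_1,\ldots,b_m)=a_1 b_{i_1} h^{\alpha_1} b_{i_1}^{-1} a_2\cdots a_t b_{i_t} h^{\alpha_t} b_{i_t}^{-1} a_{t+1}
\]
vanishes on all of $L^m$, and the remaining task is to verify that $w'$ is a valid generalized group monomial over $G$. The potentially dangerous adjacencies are (a) the inserted factor $h^{\alpha_k}$ squeezed between $b_{i_k}^{+1}$ and $b_{i_k}^{-1}$, and (b) the factor $a_{k+1}$ squeezed between $b_{i_k}^{-1}$ and $b_{i_{k+1}}^{+1}$ in the case $i_k=i_{k+1}$.

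The main obstacle is resolving these two adjacency conditions. For (b), I would first normalize $w$: whenever $i_k=i_{k+1}$, $\alpha_k\alpha_{k+1}>0$, and $a_{k+1}\in Z(G)$, merge the two occurrences via $x^{\alpha_k}a_{k+1}x^{\alpha_{k+1}}=a_{k+1}x^{\alpha_k+\alpha_{k+1}}$; after finitely many such merges every same-variable adjacency in the reduced $w$ has a non-central between-coefficient, so (b) is automatic. For (a), one must then choose $h\in K\setminus Z(G)$ so that $h^{\alpha_k}\notin Z(G)$ for each of the finitely many exponents $\alpha_k$ remaining in the reduced $w$. This final step is the delicate one: in degenerate situations (for example, when every non-central element of $K$ has a small central power dividing all the $\alpha_k$'s) one may need to replace $h$ by a suitable power or by a product of non-central elements of $K$ before the substitution. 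This choice of $h$, rather than the overall inductive strategy, is where the real content of the lemma lies.
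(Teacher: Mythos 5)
Your inductive framework and your treatment of the finite-index step are fine (indeed, your use of the exponent $n!$ via the coset action is more careful than strictly necessary). The problem is in the normal-subgroup step, and you have correctly located it yourself: after substituting $x_j\mapsto b_jhb_j^{-1}$ you need $h^{\alpha_k}\notin Z(G)$ for every exponent $\alpha_k$ occurring in $w$, and you leave this unestablished, remarking only that one ``may need to replace $h$ by a suitable power or by a product of non-central elements.'' Neither suggestion works in general: a power of $h$ is at least as likely to be central, and a product of non-central elements of $K$ is just another element of $K$ subject to the same obstruction. There are groups in which \emph{every} element of a non-central subgroup $K$ has central $d$-th power (think of quaternion-type examples, where every non-central element squares into the center); if all the $\alpha_k$ are divisible by such a $d$, no choice of $h\in K\setminus Z(G)$ makes your substituted word a generalized group monomial, and the blocks $b_{i_k}h^{\alpha_k}b_{i_k}^{-1}$ collapse to central constants, degenerating the identity. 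So as written the argument has a genuine gap, not merely an unpolished detail.

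The missing idea is a preprocessing step that the paper performs before conjugating: substitute $x_i=y_iy_{i+m}$, turning $w$ into a generalized group identity $u(y_1,\dots,y_{2m})$ of $H$ in which every exponent is $\pm1$ (one checks directly that $u$ is still a generalized group monomial over $G$, because within each block the letters $y_i$ and $y_{i+m}$ alternate, and the only same-variable adjacencies across blocks inherit a non-central coefficient from the corresponding condition on $w$). Once all exponents are $\pm1$, the substitution $y_j\mapsto y_jhy_j^{-1}$ inserts only $h^{\pm1}$, which is non-central exactly when $h$ is, so any non-central $h\in K$ works and no delicate choice is needed. Incidentally, this flattening also disposes of your adjacency issue (b) without the merging normalization you describe. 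If you add this exponent-flattening substitution at the start, your argument becomes essentially the paper's proof.
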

\begin{proof}  Assume that $H$ is a non-central almost subnormal subgroup of $G$ satisfying a generalized group identity over $G$. Let $H=H_r\le H_{r-1}\le\cdots\le H_1=G$ be an almost normal series in $G$. To prove the lemma, that is, to prove that $H_1$ satisfies a generalized group identity, it suffices to prove that $H_{r-1}$ satisfies a generalized group identity over $G$.
	
	Let $w(x_1,x_2\cdots,x_m)=a_1x_{i_1}^{n_1}a_2x_{i_2}^{n_2}\cdots a_tx_{i_t}^{n_t}a_{t+1}$ be a generalized group identity of $H$ over $G$. By replacing $x_i=y_iy_{i+m}$, we get $$u(y_1,y_2,\cdots,y_{2m})=w(y_1y_{1+m},y_2y_{2+m},\cdots,y_my_{2m})=b_1y_{j_1}^{\delta_1}b_2y_{j_2}^{\delta_2}\cdots b_sy_{j_s}^{\delta_s}b_{s+1},$$ where $\delta_i\in\{-1,1\}$, which is also a generalized group identity of $H$. Hence, without loss of generality, we can assume that in $w$, the powers $n_i\in \{-1,1\}$ for any $1\le i\le t$. There are two cases to examine:
\bigskip
	
\noindent
{\it Case 1.  $H_r$ has finite index $k$ in $H_{r-1}$:}	 
		
Then, for any $c_1,c_2,\cdots,c_m\in H_{r-1}$, we have $c_1^k, c_2^k, \cdots, c_m^k\in H_r$. By hypothesis,  $w(c_1^k,c_2^k,\cdots,c_m^k)=1$. But this means that  $H_{r-1}$ satisfies the identity $w(x_1^k,x_2^k,\cdots,x_m^k)=1$.

\bigskip		
\noindent
{\it Case 2. $H_r$ is normal in $H_{r-1}$:} 

Since $H=H_r$ is non-central, there exists a non-central element $a\in H_r$. Replacing $x_j$ by $x_jax_j^{-1}$ for any $1\le j\le m$, we get $$w_1(x_1,x_2,\cdots,x_m)=w(x_1ax_1^{-1}, x_2ax_2^{-1},\cdots, x_max_m^{-1}),$$ 
which is a generalized group monomial over $G$ by \cite[Lemma 3.2]{Pa_Bi_15}. Since $H_r$ is normal in $H_{r-1}, c_iac_i^{-1}\in H_r$ for any $c_i\in H_{r-1}$. Therefore,  $$w_1(c_1, c_2, \cdots, c_m)=w(c_1ac_1^{-1},c_2ac_2^{-1},\cdots,c_mac_m^{-1})=1,$$ 
for any $c_1, c_2, \cdots, c_m\in H_{r-1}$,
which shows that $w_1=1$ is a generalized group identity of $H_{r-1}$.		
Therefore, the proof of the lemma is now completed.	
\end{proof}

From Theorems 1, 2 in \cite{Pa_GoMi_82}, it follows that for any division ring $D$ with infinite center $F$, if $\GL_n(D)$ satisfies a generalized group identity, then $n=1$ and $D=F$. Recently, it was proved that this result remains true if one replaces $\GL_n(D)$ by any its subnormal subgroup \cite[Theorem 1.1]{Pa_BiKiRa_?}. Lemma \ref{l2.1} gives us the possibility to get the following strong result.
\begin{theorem}\label{t2.2}
	Let $D$ be a division ring with infinite center $F$ and assume that $G$ is an almost subnormal subgroup of $\GL_n(D)$. If $G$ satisfies a generalized group identity over $\GL_n(D)$, then $G$ is central. 
\end{theorem}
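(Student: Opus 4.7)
The plan is to argue by contradiction, using Lemma \ref{l2.1} to push the hypothesis from $G$ up to the whole group $\GL_n(D)$, and then invoke the classical result of Gon\c{c}alves--Mignone (Theorems 1, 2 in \cite{Pa_GoMi_82}) cited in the paragraph immediately preceding the theorem.

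Specifically, suppose for contradiction that $G$ is a non-central almost subnormal subgroup of $\GL_n(D)$ and that $G$ satisfies a generalized group identity $w(x_1,\dots,x_m)=1$ over $\GL_n(D)$. Since $G$ is non-central in $\GL_n(D)$, Lemma \ref{l2.1} applied with the ambient group $\GL_n(D)$ (and the given almost normal series for $G$) yields that $\GL_n(D)$ itself satisfies a generalized group identity over $\GL_n(D)$ (of course, after the iterated replacements of variables carried out in the proof of Lemma \ref{l2.1}, the resulting identity need not be the original $w=1$, but it is still a generalized group identity).

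Now invoke the aforementioned result of Gon\c{c}alves--Mignone: since the center $F$ of $D$ is infinite and $\GL_n(D)$ satisfies a generalized group identity, one must have $n=1$ and $D=F$. But then $\GL_n(D)=F^*$ is abelian, hence every subgroup of $\GL_n(D)$ is contained in $Z(\GL_n(D))=F^*$. In particular, $G$ is central, contradicting our standing assumption. This contradiction establishes the theorem.

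I do not expect any substantial obstacle: the proof is essentially a one-line application of Lemma \ref{l2.1} combined with the Gon\c{c}alves--Mignone theorem. The only point requiring a bit of care is to verify that the identity furnished by the iteration in Lemma \ref{l2.1} is still a genuine generalized group identity (and not merely a trivial one), but this is exactly what is recorded in the proof of that lemma (using the substitutions $x_j \mapsto x_j a x_j^{-1}$ and the reference to \cite[Lemma 3.2]{Pa_Bi_15} to ensure the resulting word remains a generalized group monomial).
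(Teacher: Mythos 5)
Your proof is correct and follows essentially the same route as the paper's own argument: apply Lemma \ref{l2.1} to transfer the generalized group identity from the non-central $G$ to all of $\GL_n(D)$, then invoke Theorems 1 and 2 of \cite{Pa_GoMi_82} to force $n=1$ and $D=F$, which makes $G$ central and yields the contradiction. (One minor slip: the cited result \cite{Pa_GoMi_82} is due to Golubchik and Mikhalev, not Gon\c{c}alves--Mignone.)
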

\begin{proof}
	If $G$ is non-central, then by Lemma~\ref{l2.1}, $\GL_n(D)$ satisfies a generalized group identity. In  view of \cite{Pa_GoMi_82}, $n=1$ and $D$ is commutative, so  $G$ is central, a contradiction. Thus, $G$ is central.
\end{proof}

\begin{corollary}\label{c2.3} Let $D$ be a division ring with infinite center $F$ and assume that $G$ is an almost subnormal subgroup of $\GL_n(D)$. If $G$ is abelian, then $G$ is central. 
\end{corollary}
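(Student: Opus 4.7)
The plan is to derive this immediately from Theorem \ref{t2.2}. If $G$ is abelian, then every pair of elements of $G$ commutes, which means $G$ satisfies the commutator identity $w(x_1,x_2) = x_1 x_2 x_1^{-1} x_2^{-1} = 1$. This $w$ is a group monomial (no coefficients from $\GL_n(D)$ appear, so the condition on consecutive equal indices with opposite-sign exponents is vacuously satisfied), and in particular it is a generalized group monomial over $\GL_n(D)$ in the sense of Section 2.

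Next I would observe that $w(c_1,c_2) = 1$ for every $c_1,c_2 \in G$ by the abelian hypothesis, so $G$ satisfies the generalized group identity $w = 1$ over $\GL_n(D)$. Theorem \ref{t2.2} then applies directly (the hypotheses on $D$, $F$, and $G$ are exactly what the corollary assumes), yielding that $G$ is central.

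There is essentially no obstacle here; the only subtlety worth noting is confirming that the commutator word really does qualify as a generalized group identity in the paper's technical sense. This is immediate because a group monomial (all coefficients equal to $1$) is by definition a strict generalized group monomial, which in turn is a generalized group monomial, as remarked in the paragraph introducing these notions. Thus the corollary follows in one line once Theorem \ref{t2.2} is in hand.
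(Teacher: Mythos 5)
Your proposal is correct and is exactly the paper's argument: the abelian hypothesis gives the commutator identity $xyx^{-1}y^{-1}=1$, which is a (generalized) group identity, and Theorem \ref{t2.2} then forces $G$ to be central. The extra remark verifying that a group monomial qualifies as a generalized group monomial is a fine sanity check but adds nothing beyond what the paper already notes in Section 2.
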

\begin{proof}
	If $G$ is abelian, then $G$ satisfies the group identity $xyx^{-1}y^{-1}=1$. So, by Theorem~\ref{t2.2}, $G$ is central.
\end{proof}

\section{Almost subnormal subgroups of $\GL_n(D)$ is normal}

Recall that a field $K$ is called {\it locally finite} if every subfield generated by finitely many elements of $K$ is finite. Hence, $K$ is locally finite if and only if its prime subfield $P$  is a finite field and $K$ is algebraic over $P$. In \cite{shir-weh}, there is the following theorem.
\bigskip

\noindent
{\bf Theorem A.}
\textit{Let $D$ be a division ring that is not a locally finite field and let $n>1$ be an integer. If $N$ is non-central normal subgroup of ${\rm GL}_n(D)$ then $N$ contains a non-cyclic free subgroup.}

The aim of this section is to prove  that if $D$ is an infinite division ring  and $n\ge 2$, then every almost subnormal subgroup of $\GL_n(D)$ is normal in $\GL_n(D)$. Hence, according to {\bf Theorem  A}, the problem on the existence of non-cyclic free subgroups in almost subnormal subgroups of general skew linear groups reduced to that in skew linear groups of degree $1$. To prove this fact, we need the following some results. 
\begin{theorem}\label{th.2} Let $D$ be a division ring, $n$ a natural number and $N$ a non-central subgroup of $GL_n(D)$. Suppose that either $n\geq 3$ or that $n=2$ but that $D$ contains at least four elements. If $xNx^{-1}\subseteq N$ for any $x\in \SL_n(D)$, then $N$ contains $\SL_n(D)$. In particular, a non-central subgroup of ${\rm GL}_n(D)$ is normal in ${\rm GL}_n(D)$ if and only if it contains ${\rm SL}_n(D)$.
\end{theorem}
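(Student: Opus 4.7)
The plan is to reduce to Dieudonn\'e's classical theorem that, under the standing hypotheses ($n\ge 3$, or $n=2$ with $|D|\ge 4$), every normal subgroup of $\SL_n(D)$ is either contained in $Z(\SL_n(D))$ or equals $\SL_n(D)$---equivalently, $\PSL_n(D)$ is simple. Put $H:=N\cap\SL_n(D)$. Since $N$ is $\SL_n(D)$-invariant by hypothesis, so is $H$, and hence $H$ is a normal subgroup of $\SL_n(D)$. Once I show that $H\not\subseteq Z(\SL_n(D))$, Dieudonn\'e forces $H=\SL_n(D)$, which gives the main conclusion $\SL_n(D)\subseteq N$.

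To exhibit a non-central element of $H$, pick any non-central $g\in N$. For every $x\in\SL_n(D)$ the hypothesis gives $xgx^{-1}\in N$, so the commutator $[x,g]=xgx^{-1}g^{-1}$ lies in $N$; because the Dieudonn\'e determinant is multiplicative with abelian target, $[x,g]$ also lies in $\SL_n(D)$, hence in $H$. Suppose for contradiction that $[x,g]$ is a scalar matrix for every $x\in\SL_n(D)$, and specialise to the elementary transvections $x=I+aE_{ij}$ with $i\ne j$ and $a\in D$. Writing $[x,g]=\mu I$, i.e.\ $xgx^{-1}=\mu g$, and comparing entries, the case $\mu\ne 1$ forces $g$ to vanish off the single entry $(i,j)$: all entries $g_{kl}$ with $k\ne i$, $l\ne j$ vanish from the ``bulk'' equations, and this in turn kills the rest of row $i$ and column $j$ via the refined equations, leaving $g$ of rank at most $1$ and hence non-invertible. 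Therefore $\mu=1$ for every such $x$, meaning $g$ commutes with every elementary transvection. A short direct calculation (commuting with $E_{ij}(a)$ for all $i\ne j$ and all $a\in D$) then shows $g$ must be diagonal with a single central diagonal entry, so $g\in Z(\GL_n(D))$, contradicting the choice of $g$. Thus $H\not\subseteq Z(\SL_n(D))$, and the reduction to Dieudonn\'e is complete.

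The ``in particular'' clause then follows at once. If $\SL_n(D)\subseteq N$ then, since $\GL_n(D)/\SL_n(D)$ is abelian via the Dieudonn\'e determinant, $N$ corresponds to a subgroup of this abelian quotient and is automatically normal in $\GL_n(D)$; conversely, a non-central normal subgroup of $\GL_n(D)$ is certainly $\SL_n(D)$-invariant, so the main assertion applies. The main obstacle I expect is the entry-wise matrix computation that rules out $\mu\ne 1$ and then upgrades the conclusion $\mu=1$ for every transvection into $g\in Z(\GL_n(D))$; the hypothesis on $(n,|D|)$ does not really enter this step but is essential in the final appeal to the simplicity of $\PSL_n(D)$, which is precisely what fails for $(n,|D|)\in\{(2,2),(2,3)\}$.
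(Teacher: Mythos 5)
Your argument is correct: the paper's own ``proof'' of this theorem is nothing but a citation of Theorems 4.7 and 4.9 of Artin's \emph{Geometric Algebra}, and what you have written is essentially a correct derivation of Theorem 4.9 (the statement at hand) from Theorem 4.7 (Dieudonn\'e's classification of normal subgroups of $\SL_n(D)$ modulo the center), including the transvection computation showing that a non-central $g\in N$ yields a non-central commutator in $N\cap\SL_n(D)$ and the observation that $\GL_n(D)/\SL_n(D)$ is abelian for the ``in particular'' clause. So you follow the same classical route as the cited source, just with the details written out rather than outsourced.
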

\begin{proof} The proof follows from Theorem 4.7 and Theorem 4.9 in \cite{artin}.
\end{proof}

\begin{Lemma}\label{th.3} Let $D$ be a division ring and $n>1$. Then, the special linear group $\SL_n(D)$ satisfies a group identity if and only if $D$ is finite.
\end{Lemma}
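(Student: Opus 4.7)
The plan is to establish the two directions separately. The forward direction is immediate: if $D$ is finite then $\SL_n(D)$ is a finite group and satisfies $x^{|\SL_n(D)|}=1$. For the converse I would assume $\SL_n(D)$ satisfies a non-trivial group identity $w(x_1,\ldots,x_m)=1$ and derive that $D$ must be finite. Two preliminary structural facts about $\SL_n(D)$ will be used throughout: it is normal, and hence almost subnormal, in $\GL_n(D)$, and for $n\ge 2$ it is non-central in $\GL_n(D)$ because the transvection $I_n+E_{12}$ lies in $\SL_n(D)$ but is not a scalar matrix.

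With these observations in hand I would split the argument on whether $F:=Z(D)$ is infinite or finite. If $F$ is infinite, I plan to invoke Theorem~\ref{t2.2} directly: a group identity is, in particular, a generalized group identity over $\GL_n(D)$ (all coefficients being $1$), so the theorem forces $\SL_n(D)\subseteq Z(\GL_n(D))$, contradicting the non-centrality recorded above. If $F$ is finite while $D$ is infinite, then $D$ cannot be commutative (otherwise $D=F$ would itself be finite), so in particular $D$ is not a locally finite field; Theorem~A then produces a non-cyclic free subgroup inside the non-central normal subgroup $\SL_n(D)$, contradicting the assumed group identity since no non-abelian free group satisfies a non-trivial identity.

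The main conceptual step is recognizing that the dichotomy on $|F|$ cleanly separates the two imported tools: Theorem~\ref{t2.2} handles the case of infinite center, while Theorem~A handles the case of finite center once one observes that a finite center together with $D$ infinite automatically rules out the locally finite field hypothesis. Beyond correctly invoking these two results I do not anticipate a genuine obstacle.
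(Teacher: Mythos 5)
Your proof is correct, but the converse direction follows a genuinely different route from the paper's. The paper fixes a maximal subfield $K$ of $D$, shows via \cite[(15.8)]{Bo_La_91} that $K$ must be infinite when $D$ is, transfers the identity from $\SL_n(D)$ to $\GL_n(D)$ by the substitution $x_i\mapsto y_{2i-1}y_{2i}y_{2i-1}^{-1}y_{2i}^{-1}$ (commutators land in $\SL_n(D)$), and then invokes Theorem~\ref{t2.2} for the whole group $\GL_n(D)$. You instead apply Theorem~\ref{t2.2} directly to $\SL_n(D)$ viewed as a (normal, hence almost subnormal) non-central subgroup, which renders the commutator substitution unnecessary, and you split on whether $Z(D)$ is finite or infinite, importing Theorem~A to dispose of the finite-center case via free subgroups. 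Your case split is arguably the more scrupulous treatment: Theorem~\ref{t2.2} is stated only for division rings with \emph{infinite center}, and the paper's argument secures only that a maximal subfield $K$ is infinite before invoking it (one can repair this by applying Theorem~\ref{t2.2} to $\GL_n(K)$ over the infinite field $K$ rather than to $\GL_n(D)$, but that is not what is written). The trade-off is that your argument leans on the deeper Theorem~A of Shirvani--Wehrfritz for the finite-center case, whereas the paper stays entirely within the generalized-group-identity machinery of Section~2; both are legitimate, since neither Theorem~A nor Theorem~\ref{t2.2} depends on this lemma, so no circularity arises.
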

\begin{proof}
	If $D$ is finite, then $\SL_n(D)$ is finite, so  $\SL_n(D)$ satisfies a group identity. Assume that $D$ is infinite. Let $K$ be a maximal subfield of $D$. If $K$ is finite, then $\dim_FD<\infty$ by \cite[(15.8)]{Bo_La_91} which implies that $D$ is finite, a contradiction. Therefore, $K$ is infinite. Suppose that $w(x_1,x_2,\cdots,x_m)=1$ is a group identity of $\SL_n(D)$. Then, $\GL_n(D)$ satisfies the group identity $$w_1(y_1,y_2,\cdots,y_{2m})=w(y_1y_2y_1^{-1}y_2^{-1},y_3y_4y_3^{-1}y_4^{-1},\cdots,y_{2m-1}y_{2m}y_{2m-1}^{-1}y_{2m}^{-1})=1.$$ In  view of Theorem~\ref{t2.2}, $\GL_n(D)$ is central, a contradiction.
\end{proof}


Let $G$ be a group and $H$ a subgroup of $G$. Denote by $\Core_G(H)$ the \textit{core} of $H$ in $G$, that is,  $$\Core_G(H)=\bigcap_{x\in G}~~xHx^{-1}.$$ It is well known that $\Core_G(H)$ is the largest normal subgroup of $G$ which is contained in $H$. Moreover, if $H$ is of finite index in $G$, then so is $\Core_G(H)$.
\bigskip

The following theorem is the main result of this section.
\begin{theorem}\label{th8.2}
	Let $D$ be an infinite division ring  and $n\ge 2$. Assume that $N$ is a non-central subgroup of $G:=\GL_n(D)$. The following conditions are equivalent:
	\begin{enumerate}
		\item $N$ is almost subnormal in $G$.
		\item $N$ is subnormal in $G$.
		\item $N$ is normal in $G$.
		\item $N$ contains $\SL_n(D)$.
	\end{enumerate}
\end{theorem}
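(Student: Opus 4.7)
The implications $(3)\Rightarrow(2)\Rightarrow(1)$ are immediate from the definitions, and $(3)\Leftrightarrow(4)$ is exactly the last sentence of Theorem~\ref{th.2}; the dimension/size hypothesis of that theorem is satisfied here since $D$ is infinite, hence $|D|\geq 4$ covers the case $n=2$. Thus the whole equivalence reduces to proving $(1)\Rightarrow(4)$.

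My plan is to argue by induction on the length $r$ of an almost normal series $N=H_r\leq H_{r-1}\leq\cdots\leq H_1=G$ that witnesses the almost subnormality of $N$. The base case $r=1$ is trivial. For the inductive step, I first observe that $H_{r-1}$ is almost subnormal in $G$ via the truncated series of length $r-1$, and it strictly contains the non-central subgroup $N$, so $H_{r-1}$ is itself non-central; the inductive hypothesis therefore gives $\SL_n(D)\subseteq H_{r-1}$. It then remains to pass from $H_{r-1}$ down to $N$, using the two types of step allowed by the series.

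If $N$ is normal in $H_{r-1}$, every element of $\SL_n(D)\subseteq H_{r-1}$ normalises $N$, so Theorem~\ref{th.2} (applicable since $N$ is non-central and $|D|\geq 4$) yields $\SL_n(D)\subseteq N$. If $N$ has finite index in $H_{r-1}$, I introduce the core $C=\Core_{H_{r-1}}(N)$, which is normal of finite index in $H_{r-1}$ and contained in $N$. Provided $C$ is non-central, the normal-step argument applied to $C$ in place of $N$ gives $\SL_n(D)\subseteq C\subseteq N$, closing this case too.

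The main obstacle, and the only place where the infiniteness of $D$ enters in a decisive way, is ruling out the residual possibility that $C$ is central in $G$. In that situation $C\subseteq Z(G)\cap H_{r-1}\subseteq Z(H_{r-1})$, so $[H_{r-1}:Z(H_{r-1})]\leq [H_{r-1}:C]<\infty$, i.e.\ $H_{r-1}$ is centre-by-finite. By Schur's theorem the commutator subgroup $[H_{r-1},H_{r-1}]$ is then finite of some order $m$, and hence $H_{r-1}$ satisfies the group identity $(xyx^{-1}y^{-1})^m=1$. Restricting this identity to the subgroup $\SL_n(D)\subseteq H_{r-1}$ contradicts Lemma~\ref{th.3}, since $D$ is infinite. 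This contradiction eliminates the central-core configuration and completes the induction, giving $\SL_n(D)\subseteq N$ and thus (4).
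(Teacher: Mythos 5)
Your proof is correct and follows essentially the same route as the paper: an induction along the almost normal series, invoking Theorem~\ref{th.2} at normal steps and passing to the core at finite-index steps, with the central-core case eliminated via a group identity on $\SL_n(D)$ contradicting Lemma~\ref{th.3}. The only (harmless) deviation is that you rule out a central core by applying Schur's theorem to get the identity $(xyx^{-1}y^{-1})^m=1$, whereas the paper argues more directly that $x^k,y^k$ land in the central core of index $k$ and so $\SL_n(D)$ satisfies $x^ky^kx^{-k}y^{-k}=1$.
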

\begin{proof}
	The implications 
	(4) $\Rightarrow$ (3) $\Rightarrow$ (2) $\Rightarrow$ (1) are trivial. Now we will show that (1) implies (4). Assume that $N$ is a non-central almost subnormal subgroup of $G$ and $$N=N_r\le N_{r-1}\le\cdots\le N_1\le N_0=G$$ is
	an almost normal series of $N$ in $G$. We shall prove that $N_i$ is normal in $G$ for all $1 \leq i\leq r$ by induction on $i$.  Assume that  $N_1$ has  finite index in $G$. Then, $\Core_G(N_1)$ is a normal subgroup, say  of finite index $m$ in $G$ and it is contained in $N_1$. If $\Core_G(N_1)$ is central, then  $x^{m}y^{m}x^{-m}y^{-m}=1$ for any $x, y\in G$. In view of Lemma~\ref{th.3}, $D$ is finite, a contradiction.  Thus, $\Core_G(N_1)$ is non-central normal subgroup of $G$. By Theorem \ref{th.2}, $\SL_n(D)\subseteq \Core_G(N_1)\subseteq N_1$. 
	
	Assume that $j>1$ and $N_j$ contains $\SL_n(D)$. We must prove that $N_{j+1}$ also contains  $\SL_n(D)$. Indeed, assume that $N_{j+1}$ has finite index in $N_j$. Then, the subgroup $\Core_{N_{j}}(N_{j+1})$ is normal in $N_j$ of finite index, say $k$, and it is contained in $N_{j+1}$. Assume that $\Core_{N_{j}}(N_{j+1})$ is central. Then, $x^ky^kx^{-k}y^{-k}=1$ for any $x, y\in N_{j}$. In particular, $\SL_n(D)$ satisfies the group identity $x^ky^kx^{-k}y^{-k}=1$. In view of Lemma~\ref{th.3}, $D$ is finite, a contradiction. Hence, $\Core_{N_{j}}(N_{j+1})$ is non-central normal subgroup of $N_j$. Therefore, $x\Core_{N_{j}}(N_{j+1})x^{-1}\subseteq \Core_{N_{j}}(N_{j+1})$ for any $x\in \SL_n(D)\subseteq N_j$.  In view of  Theorem \ref{th.2}, $\Core_{N_{j}}(N_{j+1})$ contains ${\rm SL}_n(D)$  and so does $N_{j+1}$.
	
	The implication (1) $\Rightarrow$ (4) is proved, and so the proof of the theorem is now  complete.
\end{proof}

\begin{remark}
	{\rm Theorem \ref{th8.2} no longer holds if $D$ is a finite field. Indeed, let $D=\F_q$ be a finite field with $q$ elements and consider the projective special linear group ${\rm PSL}(n, q)$ which is different from the groups ${\rm PSL}(2, 2)$ and ${\rm PSL}(2, 3)$. Then, it is well-known that ${\rm PSL}(n, q)$ is a simple group. Assume that $k$ is a prime divisor of $|{\rm PSL}(n, q)|$ and $H$ is the inverse image of a subgroup of order $k$ in ${\rm PSL}(n, q)$ via the natural homomorphism
		$${\rm SL}(n, q)\longrightarrow {\rm PSL}(n, q).$$
		
		Then, $H$ is a non-central proper subgroup of ${\rm SL}(n, q)$. By Theorem \ref{th.2}, $H$ is not normal in ${\rm GL}(n, q)$. Hence, $H$ is an almost subnormal subgroup of ${\rm GL}(n, q)$ which is not normal in ${\rm GL}(n, q)$. }
	
\end{remark}

\section{Non-cyclic free subgroups in non-commutative division rings}

As we have mentioned in the Introduction, almost subnormal subgroups that are not subnormal exist in the multiplicative group of a division ring. The aim of this section is to show that if a non-commutative division ring $D$ is weakly locally finite, then every non-central almost subnormal subgroup of $D^*$ contains a non-cyclic free subgroup. Recall that a division ring $D$ is {\it weakly locally finite} if every finite subset in $D$ generates a centrally finite division subring in $D$. Some basic properties and the existence of non-cyclic free subgroups in weakly locally finite division rings can be seen in \cite{Pa_DeBiHa_12} and \cite{hai-ngoc}. The following lemma is useful for our next study.  

\begin{Lemma}\label{l4.2}
	Let $D$ be a non-commutative weakly locally finite division ring with center $F$ and assume that $G$ is an almost subnormal subgroup of $D^*$. If $G$ satisfies a generalized group identity, then $G$ is central. In particular, If $G$ is abelian, then $G$ is central.
\end{Lemma}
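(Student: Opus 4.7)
The plan is to argue by contradiction: suppose $G$ is non-central. By Lemma~\ref{l2.1} the generalized group identity $w(x_1,\ldots,x_m)=a_1x_{i_1}^{n_1}a_2\cdots a_tx_{i_t}^{n_t}a_{t+1}=1$ satisfied by $G$ transfers up the almost normal series to become a generalized group identity of the whole group $D^*=\GL_1(D)$. The task then reduces to deriving a contradiction with $D$ being non-commutative.

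My first move is to dispose of the case in which the center $F$ is infinite: Theorem~\ref{t2.2}, applied with $n=1$ to the almost subnormal subgroup $D^*\le \GL_1(D)$, immediately forces $D^*$ to be central, i.e.\ $D$ commutative, contradicting the hypothesis. The substantive case is thus when $F$ is finite. Here I would invoke weak local finiteness: pick $x,y\in D$ with $xy\ne yx$ and let $D_0:=F(a_1,\ldots,a_{t+1},x,y)$ be the division subring generated over $F$ by the finitely many coefficients of $w$ together with $x$ and $y$. By definition $D_0$ is centrally finite; let $F_0:=Z(D_0)$. Because $x,y\in D_0$ do not commute, $D_0$ is non-commutative, and if $F_0$ were finite then $[D_0:F_0]<\infty$ would make $D_0$ a finite division ring, hence a field by Wedderburn's little theorem, which is impossible. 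Thus $F_0$ is infinite.

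Since all the coefficients $a_j$ of $w$ lie in $D_0^*$, the equation $w=1$ is a genuine generalized group identity on $D_0^*=\GL_1(D_0)$ with coefficients from $\GL_1(D_0)$. Applying Theorem~\ref{t2.2} to $D_0$—a division ring with infinite center $F_0$—and to the almost subnormal subgroup $D_0^*\le\GL_1(D_0)$ then forces $D_0^*$ to be central, i.e.\ $D_0$ commutative, contradicting $xy\ne yx$. This contradiction shows $G$ must be central. The ``in particular'' clause is immediate on taking $w=xyx^{-1}y^{-1}$, a group identity (hence a generalized group identity) satisfied by any abelian $G$.

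The principal obstacle is precisely the finite-center subcase, since Theorem~\ref{t2.2} is unavailable in that setting. The device for overcoming it is the descent to a centrally finite subring $D_0$ containing the coefficients of $w$ together with a non-commuting pair: this descent is exactly where the weakly-locally-finite hypothesis on $D$ is used, and it guarantees that $Z(D_0)$ is infinite so that the $F$-infinite case of the theorem may be applied to $D_0$ in place of $D$.
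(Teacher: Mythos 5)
Your proof is correct and follows essentially the same route as the paper's: transfer the identity to $D^*$ via Lemma~\ref{l2.1}, descend to the centrally finite division subring generated by the coefficients of the identity together with a non-commuting pair, note that its center must be infinite, and apply Theorem~\ref{t2.2} to reach a contradiction. The only cosmetic difference is your preliminary case split on whether $F$ is infinite; the paper skips this, since the descent to the subring handles both cases uniformly.
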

\begin{proof}
	Assume that $G$ is non-central and $G$ satisfies some generalized group identity $w(x_1,x_2,\cdots,x_m)=1$. By Lemma \ref{l2.1}, $D^*$ satisfies some  generalized group identity $w'(x_1,\ldots, x_m)=a_1x_1^{t_1}a_2x_2^{t_2}\cdots a_mx_m^{t_m}a_{m+1}=1$. Let $x,y\in D$ such that $xy\neq yx$. Consider  the division subring  $D_1$ of $D$ generated by $x,y$ and all $a_i$. Then,  $D_1$ is non-commutative and centrally finite. Since $a_i\in D_1^*\subseteq D^*,  w'=1$ is a generalized group identity of $D_1^*$. In view of  Theorem~\ref{t2.2}, $D_1$ is commutative, a contradiction. Hence, $G$ is central. 
\end{proof}

\begin{theorem}\label{t4.3}
	Let $D$ be a weakly locally finite division ring with center $F$. Then, every non-central almost subnormal subgroup of $D^*$ contains a non-cyclic free subgroup.
\end{theorem}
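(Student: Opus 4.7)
The plan is to reduce the theorem to the subnormal case, which is already settled by the Hai--Ngoc theorem \cite{hai-ngoc}: every non-central subnormal subgroup of the multiplicative group of a weakly locally finite division ring contains a non-cyclic free subgroup. One may assume $D$ is non-commutative, since otherwise $D^{*}=F^{*}$ is abelian and the statement is vacuous.

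I would then fix an almost normal series
$$G=H_r\le H_{r-1}\le\cdots\le H_1=D^{*},$$
and let $k$ denote the number of indices $i$ at which the step $H_{i+1}\le H_i$ fails to be normal (and is therefore of finite index). The argument proceeds by induction on $k$. When $k=0$ the series is subnormal, so the Hai--Ngoc theorem applied directly to $G$ yields the required non-cyclic free subgroup.

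For the inductive step, let $i$ be the smallest index at which the step $H_{i+1}\le H_i$ is of finite but non-normal index; by the minimality of $i$ we have $H_i\triangleleft H_{i-1}\triangleleft\cdots\triangleleft D^{*}$. Set $K:=\Core_{H_i}(H_{i+1})$, a normal subgroup of finite index in $H_i$ lying inside $H_{i+1}$. Intersecting the lower portion of the original series with $K$ and splicing on the normal chain from $K$ up to $D^{*}$ produces
$$G\cap K\le H_{r-1}\cap K\le\cdots\le H_{i+2}\cap K\le K\triangleleft H_i\triangleleft\cdots\triangleleft D^{*},$$
which is again an almost normal series in $D^{*}$, since both normality and finite index are inherited under intersection with a subgroup. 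The problematic step at position $i$ has been replaced by the normal step $K\triangleleft H_i$, so this new series has exactly $k-1$ non-normal steps.

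The step I expect to be the main obstacle is showing that $G\cap K$ is non-central, which is what allows the induction hypothesis to be invoked. I would argue by contradiction: since $K$ has finite index in $H_i$ and $G\subseteq H_i$, the subgroup $G\cap K$ has finite index $m$ in $G$ bounded by $[H_i:K]$. If $G\cap K\subseteq F^{*}$, then $g^{m}\in F^{*}$ for every $g\in G$, so $G$ satisfies the group identity $[x^{m},y^{m}]=1$. This contradicts Lemma~\ref{l4.2}, which forbids a non-central almost subnormal subgroup of $D^{*}$ from satisfying any generalized group identity. This is precisely where the weakly-locally-finite hypothesis enters, via the centrally-finite subring trick inside Lemma~\ref{l4.2}. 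With non-centrality secured, the inductive hypothesis produces a non-cyclic free subgroup of $G\cap K\subseteq G$, completing the argument.
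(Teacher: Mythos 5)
Your proof is correct, but it follows a genuinely different route from the paper's. You reduce the almost subnormal case to the subnormal case by inducting on the number of non-normal (finite-index) steps in the almost normal series, replacing the topmost such step $H_{i+1}\le H_i$ by the core $K=\Core_{H_i}(H_{i+1})$ and intersecting the lower part of the series with $K$; the base case is then the Hai--Ngoc theorem \cite{hai-ngoc} for subnormal subgroups, and the only delicate point --- non-centrality of $G\cap K$ --- is handled exactly as it should be, via the identity $[x^m,y^m]=1$ (legitimate since $G\cap K$ is \emph{normal} of finite index in $G$, $K$ being normal in $H_i\supseteq G$) and Lemma~\ref{l4.2}. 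The paper instead argues locally: by Lemma~\ref{l4.2} it finds non-commuting $a,b\in G$, passes to the centrally finite division subring $D_1=F(a,b)$, observes via Lemma~\ref{l1.1} that $N=G\cap D_1^*$ is almost subnormal in $D_1^*$, and then combines the equivalence ``no free subgroup $\Leftrightarrow$ group identity'' for subgroups of centrally finite division rings \cite[Theorem 2.21]{Pa_HaMaMo_14} with Theorem~\ref{t2.2} to force $ab=ba$, a contradiction. Your argument buys a clean structural reduction showing that the passage from ``subnormal'' to ``almost subnormal'' is formal once one has Lemma~\ref{l4.2}, at the cost of quoting the Hai--Ngoc theorem as a black box; the paper's argument is self-contained modulo \cite{Pa_HaMaMo_14} and never needs to manipulate the almost normal series beyond what is already packaged in Lemmas~\ref{l1.1} and~\ref{l2.1}. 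Both are valid, and neither is circular.
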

\begin{proof}
	Assume that $G$ is a non-central almost subnormal subgroup of $D^*$. By Lemma~\ref{l4.2}, $G$ is non-abelian. Hence, there exist $a,b\in G$ such that $ab\neq ba$. Denote by $D_1$  a division subring of $D$ generated by $a,b$. Then, $D_1$ is centrally finite. By Lemma \ref{l1.1}, $N=G\cap D_1^*$ is an almost subnormal subgroup of $D_1^*$.  We claim that $N$ contains a non-cyclic free subgroup. Indeed, if this is not the case then, by \cite[Theorem 2.21]{Pa_HaMaMo_14}, $N$ satisfies a group identity $w(x_1,x_2,\ldots,x_m)=1$.  Observe that the center $F_1$ of $D_1$ is infinite, so by Theorem~\ref{t2.2}, $N$ is central. In particular, $ab=ba$ which is a contradiction. Thus, the claim is proved, and this implies that $G$ contains a non-cyclic free subgroup.
\end{proof}

Now combining Theorem~\ref{t4.3} and Theorem A, we summary the results on the existence of non-cyclic free subgroups in almost subnormal subgroups of the general linear group over a weakly locally finite division ring.

\begin{theorem}\label{t4.3'} Let $D$ be a weakly locally finite division ring, $n$ a natural number and $N$ a non-central almost subnormal subgroup of $\GL_n(D)$. Then, $N$ contains a non-cyclic free subgroup if one of the following conditions satisfies:
	\begin{enumerate}
		\item $D$ is non-commutative;
		\item $n=1$;
		\item $n\geq 2$ and $D$ is not a locally finite field.
	\end{enumerate}
\end{theorem}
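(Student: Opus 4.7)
The plan is to observe that the three listed conditions overlap, and the cleanest organization is to split on the value of $n$ rather than on which of (1)--(3) holds. First I would dispose of the trivial commutative situations so that one is left with a single substantive argument for $n\ge 2$.

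For $n=1$, one has $\GL_1(D)=D^*$. If $D$ is commutative, then $D^*$ is abelian, so $D^*$ coincides with its own centre and the hypothesis that $N$ is non-central is vacuous; there is nothing to prove. If $D$ is non-commutative (the substance of conditions (1) and (2) when $n=1$), then $N$ is a non-central almost subnormal subgroup of $D^*$, and Theorem~\ref{t4.3} directly produces a non-cyclic free subgroup inside $N$.

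For $n\ge 2$, the plan is to combine Theorem~\ref{th8.2} with Theorem~A. The first task is to verify that $D$ is infinite, so that Theorem~\ref{th8.2} applies. In case (3) this is explicit, since any finite field is a locally finite field and $D$ is assumed not to be one. In case (1), $D$ is non-commutative, hence infinite by Wedderburn's little theorem. With $D$ infinite, Theorem~\ref{th8.2} tells us that any non-central almost subnormal subgroup $N$ of $\GL_n(D)$ is in fact normal (and even contains $\SL_n(D)$). The second task is to check the remaining hypothesis of Theorem~A, namely that $D$ is not a locally finite field: in case (3) this is the assumption, and in case (1) it is automatic since a non-commutative division ring is not a field at all. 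Theorem~A then produces a non-cyclic free subgroup inside $N$.

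There is no genuine obstacle here; the statement is a packaging of the three main technical inputs already established. All of the hard work is hidden in Theorem~\ref{th8.2} (reducing almost subnormality to normality for $n\ge 2$ over infinite $D$), in Theorem~\ref{t4.3} (handling the degree-one case for weakly locally finite non-commutative $D$), and in Theorem~A (handling normal subgroups in degree $\ge 2$ when $D$ is not a locally finite field). The only remaining bookkeeping is to match each branch of the case analysis with the hypotheses of the invoked result, which is exactly what the two paragraphs above do.
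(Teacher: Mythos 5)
Your proposal is correct and follows essentially the same route as the paper: the paper presents this theorem as a direct combination of Theorem~\ref{t4.3} (the $n=1$ case) and Theorem~A, with Theorem~\ref{th8.2} supplying the reduction from almost subnormal to normal when $n\ge 2$, exactly as you spell out. Your explicit verification that $D$ is infinite (via Wedderburn) and not a locally finite field in each branch is just the bookkeeping the paper leaves implicit.
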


The following proposition extends \cite[Corollary 3.4]{gon-ma_86}. 
\begin{proposition}\label{p7.4} Let $D$ be a division ring with center $F$ and $G$ be an almost subnormal subgroup of $D^*$. If $G\backslash F$ contains a torsion element, then $G$ contains a non-cyclic free subgroup. 
\end{proposition}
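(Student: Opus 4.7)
The plan is to reduce the statement to Gon\c calves and Mandel's \cite[Corollary 3.4]{gon-ma_86}, which gives the same conclusion under the stronger assumption that the subgroup is subnormal in $D^*$.

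I would first reduce to the case where the torsion element $a\in G\setminus F$ has prime order. If $a$ has order $n=\prod p_i^{e_i}$ and $a_i=a^{n/p_i^{e_i}}$ denote its primary components, then by the Chinese Remainder Theorem in $\Z/n\Z$ the element $a$ is an integer power-product of the $a_i$; hence if all $a_i$ lay in $F$ then so would $a$, contradicting the hypothesis. Therefore some $a_i\in G\setminus F$ has prime-power order $p^e$, and within the cyclic $p$-group $\langle a_i\rangle$ one descends by taking the first power $a_i^{p^{k}}$ that still lies outside $F$ (then $a_i^{p^{k}}$ raised to $p$ lies in $F$ but the element itself does not---after a further adjustment one gets a genuine prime-order element; the point is that for $a$ of prime order $p$, every nontrivial power of $a$ generates the same field $F(a)$ over $F$ and hence remains non-central).

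Next, from the almost normal series $G=H_r\leq H_{r-1}\leq\cdots\leq H_1=D^*$, I would extract a subnormal subgroup $K$ of $D^*$ contained in $G$ and of finite index in $G$. Set $K_1:=D^*$ and define inductively
\[
K_{i+1}:=\begin{cases} K_i\cap H_{i+1}, &\text{if }H_{i+1}\triangleleft H_i,\\ K_i\cap\Core_{H_i}(H_{i+1}), &\text{if }[H_i:H_{i+1}]<\infty.\end{cases}
\]
At each step $K_{i+1}$ is the intersection of $K_i\subseteq H_i$ with a subgroup normal in $H_i$, hence normal in $K_i$; thus $K:=K_r$ is subnormal in $D^*$, and $K\leq H_r=G$. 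A bookkeeping of coset indices along the series (using $[H_{i+1}:K_i\cap H_{i+1}]\leq[H_i:K_i]$ at normal steps, and $[H_i:\Core_{H_i}(H_{i+1})]\leq[H_i:H_{i+1}]!$ at finite-index steps) shows that $[G:K]$ is finite.

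The decisive step is to verify that $K$ contains a non-central torsion element, for then \cite[Corollary 3.4]{gon-ma_86} applied to $K$ yields a non-cyclic free subgroup of $K\subseteq G$. Let $m:=[G:\Core_G(K)]<\infty$. If $p\nmid m$, then $a^m\in\Core_G(K)\subseteq K$, and since $a$ has prime order $p$ coprime to $m$, the power $a^m$ is non-trivial, generates $\langle a\rangle$, and is therefore non-central; Gon\c calves--Mandel applies and we are done. The main obstacle is the case $p\mid m$: here I would replace $a$ by a non-central torsion element of a different prime order coprime to $m$ (using the freedom in the primary decomposition reduction, applied to various $G$-conjugates of $a$ if necessary), and if no such replacement exists---so that $G$'s non-central torsion elements have uniformly bounded prime orders all dividing $m$---invoke Lemma~\ref{l2.1} applied to the generalized group identity $xax^{-1}a^{-1}=1$ to obtain some $b\in G$ with $ba\neq ab$, then construct the free subgroup directly inside $G$ by imitating the explicit torsion-and-non-commuting-partner construction from Gon\c calves--Mandel's proof.
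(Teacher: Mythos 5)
Your extraction of a finite-index subnormal ``core'' is correct and is a genuinely different reduction from the one in the paper: the tower $K_1=D^*$, $K_{i+1}=K_i\cap H_{i+1}$ (resp.\ $K_i\cap\Core_{H_i}(H_{i+1})$ at a finite-index step) does yield a subgroup $K\le G$ that is subnormal in $D^*$ with $[G:K]<\infty$, and the index bookkeeping you sketch goes through. The proof collapses, however, at exactly the point you yourself call decisive: transferring the non-central torsion element from $G$ down to $K$. Two things go wrong. First, the preliminary reduction to a torsion element of \emph{prime} order is false: in a division ring $x^2=1$ forces $x=\pm1$, so every element of order $2$ is central, while e.g.\ $i\in\mathbb{H}^*$ is a non-central element of order $4$ whose square already lies in the center; no ``further adjustment'' can manufacture a non-central prime-order element from it. This is survivable when $\gcd(\mathrm{ord}(a),m)=1$, since then $a^m$ still generates $\langle a\rangle$ and is non-central. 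Second, and fatally, the case $\gcd(\mathrm{ord}(a),m)>1$ is not actually handled. The set of orders of torsion elements of $D^*$ can be finite (for the rational quaternions it is $\{1,2,3,4,6\}$), so for suitable $m$ there is simply no torsion element of order coprime to $m$ and your replacement strategy has nothing to work with; and ``construct the free subgroup directly by imitating the Gon\c calves--Mandel construction'' is a statement of intent, not an argument --- their construction lives inside a subnormal subgroup, and making it work for almost subnormal subgroups is precisely what Proposition~\ref{p7.4} is supposed to establish.

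The paper sidesteps all of this by localizing in the ring rather than in the group: by \cite[Proposition 2.1]{Pa_BiDu_14}, a torsion element $a\in D^*\setminus F$ lies non-centrally in a \emph{centrally finite} division subring $D_1$ of $D$; then $M=G\cap D_1^*$ is almost subnormal in $D_1^*$ by Lemma~\ref{l1.1}, is non-central because $a\in M\setminus Z(D_1)$, and Theorem~\ref{t4.3} produces the free subgroup. If you wish to keep your route through the subnormal core $K$, you still owe an argument that $K\setminus F$ contains a torsion element (or some substitute hypothesis under which \cite[Corollary 3.4]{gon-ma_86} or another result applies to $K$), and at present you do not have one.
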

\begin{proof} Assume that $a\in G\backslash F$ such that $a^n=1$ for some positive integer $n$. According to \cite[Proposition 2.1]{Pa_BiDu_14}, there exists a centrally finite division ring $D_1$ such that $a\not\in F_1=Z(D_1)$. Using the same argument in the proof of Theorem~\ref{t4.3}, we see that $M=G\cap D_1^*$ is an almost subnormal subgroup of $D^*_1$. Since $a\in M$, the subgroup $M$ is non-central. By Theorem~\ref{t4.3}, $M$ contains a non-cyclic free subgroup, so does $G$.
\end{proof}	

\begin{theorem}\label{t4.4}
	 Let $D$ be a weakly locally finite division ring with center $F$ and $G$ be an almost subnormal subgroup of $D^*$. If $G$ is soluble-by-periodic, then $G$ is central.
\end{theorem}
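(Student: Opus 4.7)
The plan is to assume for contradiction that $G$ is non-central and to leverage Theorem~\ref{t4.3} together with standard facts about free groups. By hypothesis there is a soluble normal subgroup $H\trianglelefteq G$ with $G/H$ periodic. First I would dispense with the commutative case: if $D$ is commutative then $D^*=F^*$ is abelian, so every subgroup of $D^*$ is automatically central. Thus I may assume $D$ is non-commutative, and by Theorem~\ref{t4.3} the non-central almost subnormal subgroup $G$ contains a non-cyclic free subgroup, call it $\Phi=\langle a,b\rangle$.

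The key step is to analyze $\Phi\cap H$. Since $H$ is normal in $G$, the intersection $\Phi\cap H$ is normal in $\Phi$. As a subgroup of the free group $\Phi$ it is itself free (Nielsen--Schreier), and as a subgroup of the soluble group $H$ it is soluble; but a free soluble group is cyclic, so $\Phi\cap H$ is cyclic. On the other hand, a non-abelian free group contains no non-trivial normal cyclic subgroup: the normalizer of any non-trivial cyclic subgroup $\langle c\rangle$ of a free group coincides with the unique maximal cyclic subgroup containing $c$, so $\langle c\rangle\trianglelefteq \Phi$ forces $\Phi$ to be cyclic, contradicting that $\Phi$ has rank two. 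Hence $\Phi\cap H=\{1\}$.

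It follows that $\Phi$ embeds into $G/H$, which is periodic. But $\Phi$ is a non-cyclic free group, so it contains elements of infinite order — a direct contradiction. Therefore $G$ must be central.

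The main obstacle is really the second step: knowing that $\Phi\cap H$ is simultaneously free (inside $\Phi$) and soluble (inside $H$), and then ruling out a non-trivial normal cyclic subgroup of a non-abelian free group. Both are well-known, so once Theorem~\ref{t4.3} is invoked the proof is essentially automatic; the substantive mathematical content has already been carried by the free-subgroup theorem.
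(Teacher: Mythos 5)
Your proof is correct, but it takes a genuinely different route from the paper's. The paper first observes that $H$, being normal in the almost subnormal subgroup $G$, is itself almost subnormal in $D^*$; since $H$ is soluble it satisfies a group identity, so Lemma~\ref{l4.2} forces $H\subseteq F$. Periodicity of $G/H$ then makes $G$ radical over $F$ (every $x\in G$ has $x^{n_x}\in F$), and this is incompatible with the non-cyclic free subgroup supplied by Theorem~\ref{t4.3}, since a power of a free generator would be a non-trivial central element commuting with the other generator. You instead bypass Lemma~\ref{l4.2} entirely: after invoking Theorem~\ref{t4.3} you argue purely group-theoretically that $\Phi\cap H$ is free (Nielsen--Schreier), soluble, hence cyclic, and that a non-abelian free group has no non-trivial normal cyclic subgroup, so $\Phi$ embeds in the periodic group $G/H$ --- impossible. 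Your argument isolates the cleaner general fact that no group containing a non-abelian free subgroup can be soluble-by-periodic, and it uses the division-ring structure only once (through Theorem~\ref{t4.3}); the paper's argument reuses the ring-theoretic machinery a second time via Lemma~\ref{l4.2} but yields the extra information that the soluble kernel $H$ is actually central, which is in the spirit of the surrounding results. Both proofs are complete and correct; your handling of the commutative case is harmless but unnecessary, since Theorem~\ref{t4.3} applies vacuously there (a commutative $D^*$ has no non-central subgroups).
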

\begin{proof}
	Assume that $G$ is non-central. Let $H$ be a soluble normal subgroup of $G$ such that $G/H$ is periodic. Then, $H$ is almost subnormal  in $D^*$. Since $H$ is soluble, $H$ is central by Lemma~\ref{l4.2}. Hence, for any $x\in G$, there exists a natural number $n_x$ such that $x^{n_x}\in H\subseteq F$. But this is impossible since $G$ contains a non-cyclic free subgroup by Theorem~\ref{t4.3}.
\end{proof}
Note that previously in \cite[Propositon 1]{Pa_Li_78}, Lichtman proved the same result as in Theorem \ref{t4.4} for normal subgroups in centrally finite division rings. The class of weakly locally finite division rings we consider in Theorem \ref{t4.4} is very large. Indeed, in \cite{hai-ngoc}, it was indicated that this class  strictly contains the class of locally finite division rings. Recently, in \cite{hbd}, we have constructed infinitely many examples of weakly locally finite division rings that are not even algebraic over the center.
 
Now, let $D$ be a centrally finite division ring. The following theorem gives useful characterization of a subgroup of $D^*$ that contains no non-cyclic free subgroups. 
\begin{theorem}\label{t4.5}	Let $D$ be a centrally finite division ring and $G$ be a subgroup of $D^*$. The following conditions are equivalent:
	\begin{enumerate}
		\item $G$ contains no non-cyclic free subgroups.
		\item $G$ is soluble-by-finite.
		\item $G$ is abelian-by-finite.
		\item $G$ satisfies a group identity.
		\item $G$ contains a soluble subgroup of finite index.
		\item $G$ contains an abelian subgroup of finite index.
		\item $G$ satisfies a strict generalized group identity.
	\end{enumerate}
\end{theorem}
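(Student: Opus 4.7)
The strategy is to split the seven conditions into an elementary cluster handled by standard group-theoretic manipulations, then close the circle with the deep theorem from \cite{Pa_HaMaMo_14} already invoked in the proof of Theorem~\ref{t4.3}. The equivalences (3) $\Leftrightarrow$ (6) and (2) $\Leftrightarrow$ (5) follow at once from the core construction: if $H$ has finite index in $G$, then $\Core_G(H)$ is a normal subgroup of $G$ of finite index contained in $H$, inheriting the abelian or soluble structure of $H$. The routine implications (3) $\Rightarrow$ (2), (3) $\Rightarrow$ (4) $\Rightarrow$ (7), and (4) $\Rightarrow$ (1) are also immediate: for (3) $\Rightarrow$ (4), if $G$ has a normal abelian subgroup $H$ of index $n$, then $x^n \in H$ for every $x \in G$, so $[x^n, y^n] = 1$ is a nontrivial group identity on $G$; (4) $\Rightarrow$ (7) is by definition; and (4) $\Rightarrow$ (1) holds because a non-cyclic free group satisfies no nontrivial group identity. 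For (2) $\Rightarrow$ (1) I would use the Nielsen--Schreier theorem: if $G$ contained both a non-cyclic free subgroup $F$ and a soluble subgroup $H$ of finite index, then $F \cap H$ would be of finite index in $F$, hence itself non-cyclic free but also soluble, a contradiction.

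To close the cycle, I would invoke \cite[Theorem 2.21]{Pa_HaMaMo_14}, exactly as in the proof of Theorem~\ref{t4.3}: any subgroup of the multiplicative group of a centrally finite division ring that contains no non-cyclic free subgroup satisfies a group identity. This furnishes (1) $\Rightarrow$ (4). To complete the equivalence, I also need (4) $\Rightarrow$ (3), which follows from the classical structure theorem for subgroups of $\GL_m$ satisfying a nontrivial group identity: viewing $G \subseteq D^* \hookrightarrow \GL_m(K)$ with $m^2 = [D:F]$ and $K$ a splitting field, any such subgroup is abelian-by-finite. Together with the preceding step, this establishes the equivalence of (1)--(6).

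Finally, to slot condition (7) into the cycle, I would prove (7) $\Rightarrow$ (4). The key point is that the strictness condition $i_j \ne i_{j+1}$ whenever $\alpha_j\alpha_{j+1} < 0$ prevents formal cancellations in $w = 1$; by the methods of Tomanov \cite{Pa_To_85}, a strict generalized group identity on $G$ reduces to an ordinary group identity, giving (7) $\Rightarrow$ (4). Alternatively, one can show (7) $\Rightarrow$ (1) directly: if $G$ contained a non-cyclic free subgroup, evaluating $w$ on sufficiently generic elements of that subgroup yields a reduced product of positive length in a free product $F * \langle a_1,\ldots,a_{t+1}\rangle$, contradicting $w \equiv 1$. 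The main obstacles I anticipate are the invocation of the deep Tits-alternative-type result \cite[Theorem 2.21]{Pa_HaMaMo_14} behind (1) $\Rightarrow$ (4), and the delicate handling of (7), where the constants $a_i$ interact with the putative free subgroup and one must exploit the strictness condition to rule out accidental cancellation.
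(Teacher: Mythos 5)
Your overall architecture matches the paper's (cite the deep classification for the core cycle, handle the remaining conditions by elementary reductions plus Tomanov), but your step (4) $\Rightarrow$ (3) rests on a false statement. It is not true that every subgroup of $\GL_m(K)$ satisfying a nontrivial group identity is abelian-by-finite: the integral Heisenberg group, realized as upper unitriangular matrices in $\GL_3(\Q)$, is nilpotent of class two, hence satisfies the law $[[x,y],z]=1$, yet it is not abelian-by-finite. So embedding $G$ into $\GL_m(K)$ over a splitting field and appealing to a ``classical structure theorem'' does not yield (3). Without (4) $\Rightarrow$ (3) your implication graph does not close: you have (1) $\Leftrightarrow$ (4) and (3) $\Rightarrow$ (2) $\Rightarrow$ (1), but nothing returns from (1) or (4) to (2), (3), (5), or (6). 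The repair is exactly what the paper does: \cite[Theorem 2.21]{Pa_HaMaMo_14} is the full equivalence (1) $\Leftrightarrow$ (2) $\Leftrightarrow$ (3) $\Leftrightarrow$ (4) for subgroups of centrally finite division rings, not merely (1) $\Rightarrow$ (4); quote it in full and the $\GL_m$ detour becomes unnecessary. Indeed, the fact that (4) $\Rightarrow$ (3) holds for subgroups of $D^*$ but fails for general linear groups is precisely the division-ring-specific content of that theorem, so it cannot be recovered from general linear group theory.

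Your treatment of (7) also has a soft spot. The primary route---reducing a strict generalized identity to one of the other conditions via Tomanov---is what the paper does (it cites \cite[Theorem 1]{Pa_To_85} for the equivalence (5) $\Leftrightarrow$ (7)). But your proposed alternative direct proof of (7) $\Rightarrow$ (1) is not valid as sketched: the coefficients $a_1,\dots,a_{t+1}$ are fixed elements of $D^*$, and there is no reason the subgroup they generate together with a free subgroup $F$ of $G$ should be the free product $F*\langle a_1,\dots,a_{t+1}\rangle$; the entire difficulty with generalized identities is that the coefficients can interact with and cancel against the values of the variables, and the strictness hypothesis alone does not prevent this. Drop the alternative and keep the citation. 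The remaining elementary implications in your plan (the $\Core$ argument for (2) $\Leftrightarrow$ (5) and (3) $\Leftrightarrow$ (6), the identity $[x^n,y^n]=1$ for (3) $\Rightarrow$ (4), and Nielsen--Schreier for (2) $\Rightarrow$ (1)) are correct and agree in substance with the paper's.
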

\begin{proof}
	For (1) $\Leftrightarrow$ (2)$\Leftrightarrow$ (3)$\Leftrightarrow$ (4) see \cite[Theorem 2.21]{Pa_HaMaMo_14}. The implications (2) $\Rightarrow$ (5), (3) $\Rightarrow$ (6), (6) $\Rightarrow$ (5), and (4) $\Rightarrow$ (7) are trivial.
	
	To prove (5) $\Rightarrow$ (4), assume that $G$ contains a soluble subgroup $H$ of finite index $[G:H]=m$. Since $H$ is soluble, $H$ satisfies a group identity $w(x_1,x_2,\cdots,x_n)=~1$. Then,  $w(c_1^m,c_2^m,\cdots,c_n^m)=1$ for any $c_i\in G$, so  $G$ satisfies a group identity $w(x_1^m,x_2^m,\cdots,x_n^m)=1$.
	
	Finally, the equivalence (5) $\Leftrightarrow$ (7) follows from \cite[Theorem 1]{Pa_To_85}. The proof of a theorem is now complete.
\end{proof}
\bigskip

In \cite{Pa_Li_78}, Lichtman have shown that for a normal subgroup $G$ of $D^*$, if there exists a non-abelian nilpotent-by-finite subgroup in $G$, then $G$ contains a non-cyclic free subgroup. Recently, Gon\c calves and Passman gave another proof and an explicit construction of  non-cyclic free subgroups (see \cite{Pa_GoPa_15}). In the following theorem, we consider the case when $D$ is algebraic over its center and generalize this result for almost subnormal subgroups.   
\begin{theorem}\label{t8.1}
	Let $D$ be a  division ring algebraic over its center $F$ and assume that $G$ is an almost subnormal subgroup of $D^*$. If $G$ contains a non-abelian nilpotent-by-finite subgroup, then $G$ contains a non-cyclic free subgroup. 
\end{theorem}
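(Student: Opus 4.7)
The plan is to reduce, via Lemma~\ref{l1.1}, to Theorem~\ref{t4.3} applied to a centrally finite division subring. The crux is to produce non-commuting elements $a,b$ inside the given non-abelian nilpotent-by-finite subgroup $H\subseteq G$ for which the division subring $F(a,b)\subseteq D$ is centrally finite. Let $N$ be a nilpotent normal subgroup of $H$ of finite index; I would split into two cases, according as $N$ is non-abelian or abelian.

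If $N$ is non-abelian, I take the last non-trivial term $\gamma_k(N)$ of its lower central series, which is contained in $Z(N)$. Since $\gamma_k(N)=[\gamma_{k-1}(N),N]$ is non-trivial, there exist $a\in\gamma_{k-1}(N)$ and $b\in N$ with $[a,b]\ne 1$; setting $c:=[a,b]\in Z(N)$, the elements $a,b\in N$ are non-commuting and $c\in Z(\langle a,b\rangle)$. Since $c$ commutes with $a$ and $b$, it is central in $F(a,b)$, and from $c=aba^{-1}b^{-1}$ one gets $bab^{-1}=\lambda a$ with $\lambda:=c^{-1}$ central. Iterating, the $b$-conjugation orbit $\{\lambda^k a:k\in\Z\}$ lies in the commutative subfield $F(\lambda,a)\subseteq D$, and each of its elements is conjugate to $a$ in $D^*$, hence a root of the minimal polynomial $p(x)\in F[x]$ of $a$. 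Since $p$ has at most $n:=\deg p$ roots in the field $F(\lambda,a)$, the orbit is finite, so $\lambda$ has finite order $m\le n$. Consequently $a^m$, $b^m$ and $\lambda$ all lie in $Z(F(a,b))$, and using the twisted commutation $ba=\lambda ab$ together with the minimal polynomial relations on $a$ and $b$, every element of $F(a,b)$ reduces to an $F$-linear combination of the finite monomial set $\{\lambda^i a^j b^k:0\le i<m,\,0\le j<n,\,0\le k<n'\}$, where $n'$ is the degree of the minimal polynomial of $b$ over $F$. Hence $F(a,b)$ is a finite-dimensional $F$-subalgebra of $D$, necessarily a division ring, and is centrally finite.

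If $N$ is abelian, then $H$ is abelian-by-finite; pick any non-commuting $a,b\in H$ and set $A_0:=\langle a,b\rangle\cap N$, an abelian normal subgroup of $\langle a,b\rangle$ of finite index, finitely generated by Schreier's lemma. Its elements are algebraic over $F$, so the commutative subring $F[A_0]\subseteq D$, generated as an $F$-algebra by finitely many mutually commuting algebraic elements, is a finite field extension of $F$. Then $F[\langle a,b\rangle]$ is spanned as a left $F[A_0]$-module by a system of coset representatives of $A_0$ in $\langle a,b\rangle$, a finite set, so it is finite-dimensional over $F$; being a domain, it is a division ring equal to $F(a,b)$, and is centrally finite as well.

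Finally, Lemma~\ref{l1.1} shows that $G\cap F(a,b)^*$ is an almost subnormal subgroup of $F(a,b)^*$, and it is non-central because it contains the non-commuting pair $a,b$. Applying Theorem~\ref{t4.3} to the centrally finite division ring $F(a,b)$ produces a non-cyclic free subgroup inside $G\cap F(a,b)^*\subseteq G$. I expect the main obstacle to be the first case: combining the orbit-counting argument that forces $\lambda$ to be a root of unity with the reduction of every element of $F(a,b)$ to a finite monomial basis over $F$.
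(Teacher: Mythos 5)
Your proof is correct, and its skeleton coincides with the paper's: the same dichotomy on whether the nilpotent normal subgroup of finite index is abelian, the same reduction to a centrally finite division subring containing a non-commuting pair from $G$, and the same final appeal to Lemma~\ref{l1.1} and Theorem~\ref{t4.3}. Where you genuinely diverge is in how central finiteness of that subring is established. In the non-abelian case the paper simply invokes Lichtman's Lemma~1 from \cite{Pa_Li_78} for the division subring generated by $x,y$ with $[x,y]=z\ne 1$ central; your orbit argument (conjugates $\lambda^k a$ are roots of the minimal polynomial of $a$ in the commutative field $F(\lambda,a)$, forcing $\lambda$ to be a root of unity, after which the twisted commutation $ba=\lambda ab$ yields a finite monomial spanning set) is in effect a self-contained proof of the algebraic case of that lemma. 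In the abelian case the paper passes to the division subring generated by $F$ and the whole subgroup $N$ and cites the Bell--Drensky--Sharifi theorem \cite{Pa_BeDrSh_13} to get central finiteness from finite-dimensionality over $F(A)$; you instead shrink to the finitely generated group $\langle a,b\rangle$, use Schreier to make $A_0=\langle a,b\rangle\cap N$ finitely generated, and deduce directly that $F[\langle a,b\rangle]$ is finite-dimensional over $F$, which bypasses \cite{Pa_BeDrSh_13} entirely. The trade-off: the paper's version is shorter and its Case~1 citation does not actually need algebraicity of $D$ over $F$, whereas your argument is elementary and self-contained but uses the algebraicity hypothesis at every turn (minimal polynomials of $a$, $b$, and of the generators of $A_0$), which is of course available here.
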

\begin{proof}
	Let $N$ be a non-abelian nilpotent-by-finite subgroup of $G$. Then, there exists a nilpotent normal subgroup $A$ of $N$ such that $[N:A]=m$.
	
	\textit{Case 1. $A$ is non-abelian:}
		
Since $A$ is nilpotent, there exist $x, y\in A$ such that $$1\neq z=y^{-1}x^{-1}yx, zx=xz,zy=yz.$$ Let $D_1$ be a division subring of $D$ generated by ${x, y}$. By  \cite[Lemma 1]{Pa_Li_78}, $D_1$ is centrally finite. Using the same argument as in the proof of Theorem~\ref{t4.3}, we can conclude that  $M=G\cap D^*_1$ is a non-abelian almost subnormal subgroup in $D^*_1$. By Theorem \ref{t4.3}, $M$ contains a non-cyclic free subgroup. 
	
	\textit{Case 2. $A$ is abelian:} 
	
Let $D_1$ be a division subring of $D$ generated by $F$ and $N$, and $F_1=Z(D_1)$. Since $[N:A]=m$, $D_1$ is finite dimensional over subfield $F(A)$. By \cite{Pa_BeDrSh_13}, $D_1$ is centrally finite. Using the same argument as in the proof of Theorem~\ref{t4.3}, we conclude that   $M=G\cap D^*_1$ is a non-abelian almost subnormal subgroup in $D^*_1$. By Theorem \ref{t4.3}, $M$ contains a non-cyclic free subgroup. 
\end{proof}

\section{Finitely generated almost subnormal subgroups of $\GL_n(D)$}

In this section, we  investigate finitely generated subgroups of $\GL_n(D)$ with some additional conditions. Recall that if $D=F$ is a field then  Tits' Alternative asserts that every finitely generated subgroup of $\GL_n(F)$ either contains a non-cyclic free subgroup or it is soluble-by-finite. Now, assume that  $G$ is a finitely generated subgroup of $\GL_n(D)$, where $D$ is a non-commutative division ring. It was shown in \cite{Pa_MaMaYa_00} that if $D$ is centrally finite and $G$ is a subnormal subgroup in $\GL_n(D)$, then $G$ is central. In the case when $n=1$, it was proved in \cite[Theorem~ 2.5]{Pa_HaDeBi_12} that if $D$ is of type $2$, then there are no finitely generated subgroups of $D^*$ containing the center $F^*$. Recall that a division ring $D$ is said to be of {\it type $2$} if the division subring $F(x,y)$ of $D$ generated over its center $F$ by any two elements $x,y\in D$ is a finite-dimensional vector space over $F$. The aim of this section is to carry over these results  for  almost subnormal subgroups of ${\rm GL}_n(D)$, where $D$ is a weakly locally finite division ring. Recall that 
Theorem \ref{th8.2}  implies that any almost subnormal subgroup of $\GL_n(D)$ is normal if $n\geq 2$, but we
shall continue to use ``almost subnormal" instead of ``normal" to compare the results
with the corresponding ones in the case n = 1.

The following result, which is an easy consequence of Theorem~\ref{t2.2} gives the characterization of finite almost subnormal subgroups in division rings.
\begin{Lemma}\label{l6.1}
	Let $D$ be a division ring with center $F$ and assume that $G$ is an almost subnormal subgroup of $D^*$. If $G$ is finite, then $G$ is central.
\end{Lemma}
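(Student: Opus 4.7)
The plan is to convert the finiteness of $G$ into a group identity satisfied by every element of $G$, and then to invoke Theorem~\ref{t2.2}. Setting $m := |G|$, each $g \in G$ satisfies $g^m = 1$, so $G$ obeys the group identity $x^m = 1$, which is in particular a (strict) generalized group identity over $\GL_1(D) = D^*$.

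In the principal case, when the center $F$ is infinite, Theorem~\ref{t2.2} applied with the parameter $n$ of that theorem equal to $1$ immediately forces $G$ to be central, which finishes the proof.

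When $F$ is finite, Theorem~\ref{t2.2} does not apply verbatim, so I would sidestep the issue by appealing to Proposition~\ref{p7.4} instead. Suppose for contradiction that $G$ is not central and pick $a \in G \setminus F$; because $G$ is finite, $a$ is a torsion element of $G \setminus F$, so Proposition~\ref{p7.4} produces a non-cyclic free subgroup inside $G$, contradicting $|G|<\infty$. Hence $G \subseteq F$, i.e., $G$ is central. The only mild obstacle in the entire argument is this split into two cases on $|F|$; both cases are immediate repackagings of the earlier machinery once one observes that the finiteness of $G$ furnishes the required identity.
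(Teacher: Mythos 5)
Your argument is correct, and it reaches the conclusion by a genuinely different route from the paper's in the finite-center case. The paper first passes to the division subring $D_1=F(G)$ generated by $G$ over $F$, so that $G$ is almost subnormal in $D_1^*$ by Lemma~\ref{l1.1}; when $F$ is infinite it notes that $Z(D_1)$ is infinite and invokes Theorem~\ref{t2.2} for $D_1$, and when $F$ is finite it observes that $D_1$ is a finite division ring, hence a field by Wedderburn's little theorem, so that $G$ is abelian, and then appeals to Theorem~\ref{t4.4}. You instead extract the explicit identity $x^{|G|}=1$ and apply Theorem~\ref{t2.2} directly to $G\le D^*$ when $F$ is infinite, and you dispose of the remaining case via Proposition~\ref{p7.4}: a non-central finite $G$ would contain a torsion element outside $F$ and hence a non-cyclic free subgroup, which is absurd. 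There is no circularity, since Proposition~\ref{p7.4} is established earlier and its proof (resting on Theorem~\ref{t4.3}, Lemma~\ref{l4.2} and Theorem~\ref{t2.2}) does not use the present lemma. In fact your second argument is stronger than you claim: Proposition~\ref{p7.4} carries no hypothesis on $F$ whatsoever, so it settles the lemma in one stroke for any center, and the case split together with the identity $x^{|G|}=1$ and Theorem~\ref{t2.2} could be omitted entirely. What the paper's route buys instead is independence from Proposition~\ref{p7.4} (and hence from the free-subgroup machinery of Section~4's Theorem~\ref{t4.3}) in the infinite-center case, at the cost of the Wedderburn reduction when $F$ is finite.
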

\begin{proof}
	Let $D_1=F(G)$ be the division subring of $D$ generated by $G$ over $F$. By Lemma \ref{l1.1}, $G$ is almost subnormal in $D_1^*$. If $F$ is finite, then $D_1$ is a field. In particular, $G$ is abelian, so by Theorem ~\ref{t4.4}, $G$ is central. If $F$ is infinite, then so is the center of $D_1$. Hence, in view of Theorem~\ref{t2.2}, $G$ is central. 
\end{proof}

\bigskip

\begin{remark}\label{r6.2}{\rm
	Let $H=\langle a_1,a_2,\cdots,a_m\rangle$ be a finitely generated subgroup of $\GL_n(D)$, where $D$ is a division ring.  Denote by $S$ the set of all entries of all matrices $a_i,a^{-1}_i$, and by $R$ the subring of $D$ generated by $S$. Then, $H$ is contained in $\GL_n(R)$. In particular, $H$ is contained in $\GL_n(D_1)$, where $D_1$ is the division subring of $D$ generated by $S$. This fact will be used frequently in this section.  }
\end{remark}

Let $G$ be a subgroup of $\GL_n(F)$, where $F$ is a field. Suppose that $a\in \GL_n(F)$. It is easy to see that $a+xI_n$ is non-invertible for finitely many elements $x\in F$: if $a+xI_n$ is non-invertible, then the determinate $|a+xI_n|$ of $a+xI_n$, a polynomial of degree $\le n$ in $x$, is $0$. By the Vandermonde argument \cite[Propositions 2.3.26 and 2.3.27]{Bo_Ro_80}, there are finitely many elements $x\in F$ such that $|a+xI_n|=0$.

Now assume that $H$ is an almost subnormal subgroup of $G$, and
$$H=H_r\leq H_{r-1}\leq \cdots\leq H_1=G$$
is an almost normal series of subgroups of $G$. For any $a,b\in H$ and $x\in F$ such that $b+xI_n$ is invertible, put $c_1(a,b,x):=(b+xI_n)a(b+xI_n)^{-1}$, and for $1<i\le r$, we define $c_i$ inductively as the following: if $H_i$ is normal in $H_{i-1}$, then  $c_i(a,b,x):=c_{i-1}bc_{i-1}^{-1}$, otherwise $c_i(a,b,x):=c_{i-1}^{\ell_i}$, where $\ell_i$ is the index of $H_i$ in $H_{i-1}$.

\begin{Lemma}\label{l6.3} Let $c_i(a,b,x)$ be as  above. Then, $c_i=(b+xI_n)w_i(a,b)(b+xI_n)^{-1}$, where $w_i(a,b)$ is a reduced word in $a,b,a^{-1},b^{-1}$ which begins and ends by $a$ or $a^{-1}$.
\end{Lemma}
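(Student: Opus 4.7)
The plan is induction on $i$, with the base case $i=1$ giving $c_1=(b+xI_n)a(b+xI_n)^{-1}$ immediately, so one takes $w_1(a,b)=a$, which is a reduced word of length one beginning and ending with $a$.

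For the inductive step, I would assume $c_{i-1}=(b+xI_n)\,w_{i-1}(a,b)\,(b+xI_n)^{-1}$ with $w_{i-1}$ a reduced word starting and ending with a letter in $\{a,a^{-1}\}$, and then split along the two possibilities for the series. When $H_i$ is normal in $H_{i-1}$ one has $c_i=c_{i-1}bc_{i-1}^{-1}$; substituting and invoking the crucial commuting identity $b(b+xI_n)=(b+xI_n)b$ (whence $(b+xI_n)^{-1}b(b+xI_n)=b$) collapses the inner pair of conjugators and produces
\[
c_i=(b+xI_n)\,w_{i-1}(a,b)\,b\,w_{i-1}(a,b)^{-1}\,(b+xI_n)^{-1}.
\]
I would then set $w_i:=w_{i-1}\,b\,w_{i-1}^{-1}$: the two junctions around the central $b$ place an $a^{\pm 1}$ next to a $b^{\pm 1}$ so no cancellation occurs, and the outer letters of $w_i$ are the first letter of $w_{i-1}$ and its inverse, both elements of $\{a,a^{-1}\}$.

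When instead $[H_{i-1}:H_i]=\ell_i$ is finite, so $c_i=c_{i-1}^{\ell_i}$, the outer conjugators telescope and give $c_i=(b+xI_n)\,w_{i-1}^{\ell_i}\,(b+xI_n)^{-1}$, so I would let $w_i$ be the freely reduced form of $w_{i-1}^{\ell_i}$. The task then is to confirm that reduction in the free group on $\{a,b\}$ does not erase the outermost letters. Writing $w_{i-1}=u\,v\,u^{-1}$ with $v$ cyclically reduced, one has $w_{i-1}^{\ell_i}$ reducing to $u\,v^{\ell_i}\,u^{-1}$ (cancellation being confined to the $u^{-1}u$ junctions, while $v^{\ell_i}$ is already reduced), so the first and last letters of $w_i$ agree with those of $w_{i-1}$, both in $\{a,a^{-1}\}$.

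The main obstacle is this control of the cyclic reduction in the second case: one must rule out the pathological possibility that iterated cancellation inside $w_{i-1}^{\ell_i}$ propagates through an entire copy and meets the outer letters, and the cyclic-reduction decomposition $w_{i-1}=uvu^{-1}$ is the cleanest way to do it. The first case, by contrast, is essentially formal once the commuting identity $b(b+xI_n)=(b+xI_n)b$ — the algebraic engine of the whole recursion — has been isolated.
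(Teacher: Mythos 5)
Your proof is correct and follows essentially the same inductive argument as the paper, using the identical recursions $w_{i+1}=w_i\,b\,w_i^{-1}$ in the normal case and $w_{i+1}=w_i^{\ell_{i+1}}$ in the finite-index case, both resting on the fact that $b$ commutes with $b+xI_n$. The only difference is that you explicitly check that free reduction cannot destroy the outermost letters $a^{\pm 1}$ (via the cyclic-reduction decomposition $w_{i-1}=uvu^{-1}$ in the power case), a point the paper's proof leaves implicit.
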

\begin{proof}
	We prove the lemma by induction in $1\le i\le r$. If $i=1$, then $w_1(a,b)=(b+xI_n)^{-1}c_1(b+xI_n)=a$. Assume that $c_i=(b+xI_n)w_i(a,b)(b+xI_n)^{-1}$, where  a reduced word $w_i(a,b)$ begins and ends by $a$ or $a^{-1}$ for any $i\le 1$. We have to show  that $c_{i+1}$ has a same property, that is, $c_{i+1}(a,b,x)=(b+xI_n)w_{i+1}(a,b)(b+xI_n)^{-1}$ with a reduced word $w_{i+1}(a,b)$ ending by $a$ or $a^{-1}$. Indeed, there are two cases to examine.
	
	\bigskip
	
	\noindent
	{\it Case 1}. If $H_{i+1}$ is normal in $H_i$, then
	
	\hspace*{0.5cm} $c_{i+1}(a,b,x)=c_ibc_i^{-1}$
	
	\hspace*{0.5cm} $=((b+xI_n)w_i(a,b)(b+xI_n)^{-1})b((b+xI_n)w_i(a,b)(b+xI_n)^{-1})^{-1}$
	
	\hspace*{0.5cm} $=(b+xI_n)w_i(a,b)bw_i(a,b)^{-1}(b+xI_n)^{-1}$
	
	\hspace*{0.5cm} $=(b+xI_n)w_{i+1}(a,b)(b+xI_n)^{-1},$ where $w_{i+1}=w_i(a,b)bw_i(a,b)^{-1}$.
	
\bigskip	

\noindent
	{\it Case 2}. If $H_{i+1}$ has finite index $r_{i+1}$ in $H_i$, then
	
	\hspace*{0.5cm} $c_{i+1}(a,b,x)$
	
	\hspace*{0.5cm} $=c_i(a,b,x)^{\ell_{i+1}}$
	
	\hspace*{0.5cm} $=((b+xI_n)w_i(a,b)(b+xI_n)^{-1})^{\ell_{i+1}}$
	
	\hspace*{0.5cm} $=(b+xI_n)w_i(a,b)^{\ell_{i+1}}(b+xI_n)^{-1}$
	
	\hspace*{0.5cm} $=(b+xI_n)w_{i+1}(a,b)(b+xI_n)^{-1}$, where $w_{i+1}=w_i(a,b)^{\ell_{i+1}}$.
	
	The proof of the lemma is now complete.	
\end{proof}

\bigskip 

Notice that in the proof of main results in \cite{Pa_MaMaYa_00}, the authors considered  two cases $n=1$ and $n>1$ separately with two difference arguments. By modifying the proof of the case $n=1$ in \cite{Pa_MaMaYa_00}, we will prove our main result as the following for the arbitrary case.

\begin{theorem}\label{t6.4}
	Let $D$ be a weakly locally finite division ring, and assume that $G$ is an infinite almost subnormal subgroup of $\GL_n(D)$. If $G$ is finitely generated, then $G$ is central.
\end{theorem}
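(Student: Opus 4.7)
My plan is to argue by contradiction: I assume $G$ is non-central and exploit Lemma~\ref{l6.3} to produce a family of distinct elements of $G$ too large to fit inside a finitely generated group.

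First, the reduction. By Remark~\ref{r6.2}, $G \subseteq \GL_n(D_1)$, where $D_1$ is the division subring of $D$ generated by the matrix entries of a finite generating set of $G$ and of their inverses; weak local finiteness of $D$ makes $D_1$ centrally finite. Set $F_1 = Z(D_1)$. If $F_1$ were finite then $D_1$ and hence $G$ would be finite, contradicting the hypothesis; so $F_1$ is infinite. By Lemma~\ref{l1.1}, $G$ is almost subnormal in $\GL_n(D_1)$. When $n \geq 2$, Theorem~\ref{th8.2} forces $\SL_n(D) \subseteq G$; combined with $G \subseteq \GL_n(R)$ for the finitely generated subring $R \subseteq D$ produced by Remark~\ref{r6.2}, the containment $\SL_n(D) \subseteq \GL_n(R)$ forces $D = R$. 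Then $D$ is a finitely generated $\mathbb{Z}$-algebra, so (by weak local finiteness) centrally finite, and Artin--Tate exhibits the center $F$ as a finitely generated $\mathbb{Z}$-algebra that is a field --- hence finite by Zariski's lemma, making $D$ and $G$ finite, a contradiction. If $n = 1$ and $D_1$ is commutative, $G$ is abelian and Lemma~\ref{l4.2} gives $G$ central. Thus the essential case is $n = 1$ with $D_1$ non-commutative.

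I then embed $D_1^*$ into $\GL_N(F_1)$ via the left regular representation, with $N = \dim_{F_1} D_1 < \infty$; the almost normal series of $G$ inside $D_1^*$ remains an almost normal series inside this embedded copy of $D_1^*$, so Lemma~\ref{l6.3} applies. For every $a, b \in G$ and every $x \in F_1$ for which $b + xI_N$ is invertible,
\[
c_r(a, b, x) = (b + xI_N)\, w_r(a, b)\, (b + xI_N)^{-1} \in G,
\]
where $w := w_r(a, b)$ is a reduced word in $a, b, a^{-1}, b^{-1}$ beginning and ending with $a^{\pm 1}$. Theorem~\ref{t4.3'} guarantees that $G$ contains a non-cyclic free subgroup; choose $a, b \in G$ freely generating such a subgroup. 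Because $w$ begins and ends with $a^{\pm 1}$ it is not a power of $b$ in the free group on $a, b$, so $[w, b] \neq 1$ there and hence $[b, w] = bw - wb \neq 0$ in $D_1$. Choosing $b \notin F_1$ (possible because $G \not\subseteq F_1$ when $D_1$ is non-commutative) makes $b + x$ invertible in $D_1$ for every $x \in F_1$, and the identity
\[
v(x) := (b+x)\, w\, (b+x)^{-1} = w + [b, w](b+x)^{-1}
\]
shows that $v(x_1) = v(x_2)$ implies $[b,w](b+x_1)^{-1} = [b,w](b+x_2)^{-1}$; cancelling $[b, w]$ (invertible since nonzero in the division ring $D_1$) forces $x_1 = x_2$. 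Thus $x \mapsto v(x)$ is an injection $F_1 \hookrightarrow G$.

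It remains to derive a contradiction. If $F_1$ is uncountable this is immediate, since $G$ is finitely generated and therefore countable. For countable $F_1$ I pass to the matrix realization: the entries of $v(x)$ in $\GL_N(F_1)$ are rational functions $P_{ij}(x)/Q(x) \in F_1(x)$ with $Q(x) = \det(b + xI_N)$, and $G \subseteq \GL_N(R)$ for the finitely generated $\mathbb{Z}$-subring $R \subseteq F_1$ generated by the entries of a finite generating set of $G$ and of their inverses; hence $P_{ij}(x)/Q(x) \in R$ for every $x \in F_1$ outside a finite set. The main obstacle --- the technical heart of the argument --- is the following lemma: \emph{a non-constant element of $F_1(x)$ cannot take values in a finitely generated $\mathbb{Z}$-subring $R$ of $F_1$ at every point of $F_1$ where it is defined}. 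I would prove it via the $\mathfrak m$-adic valuations attached to the maximal ideals of $R$ (whose residue fields are finite by the Nullstellensatz): approximating any pole of the rational function in the $\mathfrak m$-adic completion of $F_1$ produces values of arbitrarily large negative $\mathfrak m$-adic valuation, incompatible with membership in $R$. Granting this lemma, every entry of $v(x) - w$ is identically zero in $F_1(x)$, so $v(x) \equiv w$ on $F_1$; but then $(b+x)\, w\, (b+x)^{-1} = w$ for all such $x$, forcing $w$ to commute with $b$, contradicting $[b, w] \neq 0$.
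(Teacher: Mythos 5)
Your overall architecture coincides with the paper's at the decisive point: both proofs reduce to a centrally finite situation, pick free generators $a,b$ (via Theorem~\ref{t4.3'}), and use Lemma~\ref{l6.3} to produce the one-parameter family $x\mapsto (b+xI)w_r(a,b)(b+xI)^{-1}\in G$, deriving a contradiction from the fact that this family cannot be constant. Some of your packaging is genuinely different and cleaner: the paper treats all $n$ uniformly by embedding $\GL_n(D)$ into $\GL_{nt}(F)$, whereas you dispose of $n\ge 2$ outright via Theorem~\ref{th8.2} ($\SL_n(D)\subseteq G\subseteq \GL_n(R)$ forces $D=R$, then Artin--Tate and Zariski's lemma make $D$ finite); your identity $v(x)=w+[b,w](b+x)^{-1}$ gives injectivity of $x\mapsto v(x)$ directly inside $D_1$, and the uncountable-center shortcut is correct.

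The genuine gap is the lemma you yourself flag as ``the technical heart'': that a non-constant $P_{ij}(x)/Q(x)$ with $Q(x)=\det(b+xI_N)$ cannot take all its values on $F_1$ inside the finitely generated ring $R$. Your sketch does not work as stated. First, the poles of $P_{ij}/Q$ are the roots of $Q$, and by construction $Q$ has \emph{no} roots in $F_1$ (that is exactly why $b+x$ is invertible for every $x\in F_1$); they live in $\overline{F_1}\setminus F_1$. So ``approximating a pole in the $\mathfrak{m}$-adic completion'' presupposes both that $Q$ acquires a root in the chosen completion and that this root is a limit of elements of $F_1$ --- neither holds for an arbitrary $\mathfrak{m}$ (compare $Q(x)=x^2+1$ over $\Q$ and $\mathfrak{m}=(3)$), so one must first \emph{produce} a suitable prime, e.g.\ by passing to the integral closure of $R$ in $K_0(\theta)$ for a root $\theta$ of $Q$ and finding a height-one prime of residue degree one; none of this is indicated. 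Second, maximal ideals of a finitely generated $\Z$-domain of Krull dimension $\ge 2$ do not carry valuations, so ``the $\mathfrak{m}$-adic valuation'' is not defined without a further reduction (Noether normalization, or height-one primes of the normalization); the finiteness of residue fields that you invoke plays no role in the argument as sketched. This missing lemma is precisely where the paper does its real work: it locates a transcendental $\alpha$ so that $R$ sits inside a localization $K[\alpha][T]$ of a polynomial ring at finitely many primes $w_1(\alpha),\dots,w_l(\alpha)$, and then substitutes the carefully chosen value $x=(w_1(\alpha)\cdots w_l(\alpha))^p$ with $p$ large, forcing the denominator $f_1(x(\alpha))$ to be both coprime to every allowed denominator and of too large a degree --- a concrete choice of evaluation point that your sketch never supplies. (Incidentally, the paper's own Case 2 only treats $P(S)$ not algebraic over $\Q$; a correct proof of your lemma would cover the number-field case uniformly, which is a genuine advantage --- but only once the lemma is actually proved.)
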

\begin{proof} Assume by contrary that $G$ is non-central finitely generated subgroup of $\GL_n(D)$. Since $D$ is weakly locally finite, by Remark~\ref{r6.2},  without loss of generality, we may assume that $D$ is centrally finite with $[D:F]=t<\infty$, where $F$ is the center of $D$. 
	
	We first claim that $G$ contains a non-cyclic free subgroup by showing that $D$ and $G$ satisfy the requirements  of Theorem \ref{t4.3'}. Indeed, if $\Char(F)=p>0$ and $D=F$ is a field algebraic over its prime subfield, then every element of $\GL_n(F)$ is torsion. By Schur's Theorem \cite[Theorem (9.9), p. 154]{Bo_La_91}, $G$ is finite that contradicts to the hypothesis.  Therefore, the claim is proved.
	
	Clearly the group $\GL_n(D)$ may be viewed as a subgroup of $\GL_{nt}(F)$, so $G$ is also a subgroup of $\GL_{nt}(F)$. By Remark~\ref{r6.2}, $G$ is a subgroup of $\GL_{nt}(P(S))$, where $P$ is the prime subfield of $F$ and $S$ is a finite set of $F$. Since $G$ is infinite, so is $P(S)$. Let us consider two cases.
\bigskip

\noindent	
	{\it Case 1}. $\Char(P)>0$: 
	
Let $S=\{\alpha_1, \alpha_2, \ldots, \alpha_h \}$. If all elements of $S$ are algebraic over $P$, then $P(S)$ is finite, a contradiction. Let $i_0$ be the largest index such that $\alpha:=\alpha_{i_0}$ is not algebraic over $P$. If $K=P(\alpha_1, \alpha_2, \ldots, \alpha_{i_0-1})$ and $L=K(\alpha)$, then $[P(S):L]=s<\infty$.	
	Therefore, $G$ may be viewed as a subgroup of $\GL_{nts}(L)$. Recall that $\alpha$ is not algebraic over $K$, so $L$ can be considered as the field of fractions of $K[\alpha]$. Again by Remark~\ref{r6.2}, we can find a finite subset $$T=\left\{\, \frac{u_1(\alpha)}{v_1(\alpha)},\frac{u_2(\alpha)}{v_2(\alpha)},\cdots, \frac{u_m(\alpha)}{v_m(\alpha)}\,\right\}$$ of $L$ such that $\GL_{nts}(K[\alpha][T])$ contains $G$. Note that $v_1(\alpha),u_1(\alpha),\cdots, v_m(\alpha),u_m(\alpha)$ are elements of $K[\alpha]$ such that $v_i(\alpha)$ and $u_i(\alpha)$ are co-prime for any $i$. Let $a,b\in G$ be two elements such that $\langle a,b\rangle$ is a non-abelian free subgroup of $G$ and let $G=G_r\subseteq G_{r-1}\subseteq \cdots\subseteq G_1=\GL_n(D)$ be an almost normal series of $G$ in $\GL_{nts}(K[\alpha][T])$. Observe that $b+xI_{nts}$ is non-invertible for finitely many elements $x\in K[\alpha][T]$. Now, take an  $x\in K[\alpha][T]$ such that $b+xI_{nt}$ is invertible. By Lemma~\ref{l6.3}, $c_r(a,b,x)=(b+xI_{nts})w_r(a,b)(b+xI_{nts})^{-1}\in G_r=G$. We claim that all the entries of $c_r(a,b,x)$ do not depend on $x$. Indeed, assume that there exists an entry $(i,j)$-th of $c_r(a,b,x)$ depending on $x$. Without loss of generality, we may assume that  $(i,j)=(1,1)$. Notice that the determinant of $b+xI_{nts}$ is a polynomial $f(x)$ in $x$ of degree $q=nts$, so the $(1,1)$-th entry has the form $$\frac{g(x)}{f(x)}=\frac{b_qx^q+b_{q-1}x^{q-1}+\cdots +b_0}{x^q+c_{q-1}x^{q-1}+\cdots+c_0}\in K[\alpha][T].$$ Assume that $b_q=\frac{u_{m+1}(a)}{v_{m+1}(a)}$. Then,  $\frac{g(x)}{f(x)}-b_q\in K[\alpha][T\cup\{\frac{u_{m+1}(a)}{v_{m+1}(a)}\}]$, and clearly, the degree of the numerator of $\frac{g(x)}{f(x)}-b_q$ is less than $q$. Hence, without loss of generality, we may assume that $b_q=0$, that is, $$\frac{g(x)}{f(x)}=\frac{b_{q-1}x^{q-1}+\cdots+b_0}{x^{q}+c_{q-1}x^{q-1}+\cdots+c_0}\in K[\alpha][T].$$
	Observe that $c_0$ is the determinant of $b$, which is invertible element in $\GL_{nts}(K[\alpha][T])$, so  $c_0\ne 0$.
	Put $$\frac{g_1(x)}{f_1(x)}=\frac{c_0^{-1}g(x)}{c_0^{-1}f(x)},$$ that is, $g_1(x)=c_0^{-1}b_{q-1}x^{q-1}+\cdots+c_0^{-1}b_0$ and $f_1(x)=c_0^{-1}x^{q}+c_0^{-1}c_{q-1}x^{q-1}+\cdots+1$. Let $w_1(\alpha),\cdots, w_\ell(\alpha)$ be  all prime factors of $v_1(\alpha),u_1(\alpha), \cdots, v_m(\alpha),u_m(\alpha)$ and put $$x(\alpha)=(w_1(\alpha)w_2(\alpha)\cdots w_l(\alpha))^p.$$ Then, since the degree $g_1(x)$ is less than $f_1(x)$'s when $p$ is large enough, the degree of the denominator $f_1(x(\alpha))$ in $\alpha$ is greater than the numerator's which implies that there exists $i$ such that $f_1(x(\alpha))$ is a multiple of $w_i(\alpha)$. Hence, $1$ is also a multiple of $w_i(\alpha)$ which is a contradiction. Thus, the claim is proved. Therefore, $c_r(a,b,x)$ does not depend on $x$. Now, one has  $c_r(a,b,0)=c_r(a,b,y)$ for some $y\in K[\alpha]\backslash\{0\}$ such that $b+yI_q$ is invertible. Hence, $bw_r(a,b)b^{-1}=(b+yI_q)w_r(a,b)(b+yI_q)$, equivalently, $bw_r(a,b)b^{-1}=w_r'(a,b)$, which is a contradiction to the fact that $a,b$ are generators of a non-cyclic free group.
\bigskip

\noindent
{\it Case 2}. $\Char(F)=0$:
	
	 If $P(S)$ is not algebraic over $\Q$, then by the same procedure as in the first part of {\it Case 1}, we conclude that   the field $P(S)$ contains a subfield $L_1=K_1(\beta)$ such that $[P(S):L_1]=s_1<\infty$, where $K_1$ is a subfield of $P(S)$ and $\beta$ is not algebraic over $K_1$. Now, again by the same procedure as in Case 1 with replacing $K(\alpha)$ by $K_1(\beta)$,  one has a contradiction.
	 
	 Therefore, the proof of the theorem is now complete.
	
\end{proof}

\bigskip

\end{document}